\newtheorem{theorem}{Theorem}[section]
\theoremstyle{plain}
\newtheorem{note}[theorem]{Note}
\newtheorem{corollary}[theorem]{Corollary}
\newtheorem{definition}[theorem]{Definition}
\newtheorem{example}[theorem]{Example}
\newtheorem{lemma}[theorem]{Lemma}
\newtheorem{proposition}[theorem]{Proposition}
\newtheorem{remark}[theorem]{Remark}
\numberwithin{equation}{section}
\newcommand{\A}{\mathcal{A}}
\newcommand{\QQ}{\mathscr{Q}}
\newcommand{\PP}{\mathcal{L}}
\newcommand{\p}{\mathcal{L}}
\newcommand{\jj}{\mathsf{j}_{\A}}
\newcommand{\jbj}{\mathsf{j}_{\B}}
\newcommand{\kk}{\mathscr{K}}
\newcommand{\B}{\mathcal{B}}
\newcommand{\C}{\mathcal{C}}
\newcommand{\D}{\mathcal{D}}
\newcommand{\EL}{\mathcal{E}_{\leqs}}
\newcommand{\V}{\mathcal{V}}
\newcommand{\N}{\mathbb{N}}
\newcommand{\U}{\mathcal{U}}
\newcommand{\IDA}{Id_{\mathcal{A}}}
\newcommand{\IDB}{Id_{\mathcal{B}}}
\newcommand{\IDD}{Id_{\mathcal{D}}}
\newcommand{\Z}{\mathbb{Z}}
\newcommand{\cl}{\mathsf{cl}}
\newcommand{\ov}{\overline}
\newcommand{\da}{\mathord\downarrow}
\newcommand{\leqs}{\leqslant}
\newcommand{\POSGRL}{{\mathsf{PoSgr}_\leq}}
\newcommand{\MPOSGR}{{\mathsf{PoSgr}_s}}
\newcommand{\MPOSet}{{\mathsf{PoS}_s}}
\newcommand{\POSet}{{\mathsf{PoS}}}
\newcommand{\CPS}{{\mathsf{PoSgr^-}}}
\newcommand{\POSG}{{\mathsf{PoSgr}}}
\newcommand{\SET}{{\mathsf{Set}}}
\newcommand{\QUAN}{{\mathsf{Quant}}}
\newcommand{\UQUAN}{{\mathsf{UQuant}}}
\newcommand{\AQUAN}{{\mathcal A}{\mathsf{Quant}}}
\newcommand{\MSEM}{ {\mathsf{MSl_s}}}
\newcommand{\MQUAN}{ {\mathsf{Quant_s}}}
\newcommand{\UMQUAN}{ {\mathsf{UQuant_s}}}
\newcommand{\JQUAN}{{\mathcal D}{\mathsf{Quant}}}
\newcommand{\DQUAN}{{\mathcal D}{\mathsf{Quant}}}
\newcommand{\wid}{\widehat}
\begin{document}
	\title{Reflectors to quantales}
	
	\author[X. Zhang]{Xia Zhang}
	
	\address{\textbf{Xia Zhang}\\
		School of Mathematical Sciences, South China Normal University, 510631 Guangzhou, China}
	\email{xzhang@m.scnu.edu.cn}
	
	\author[J. Paseka]{Jan Paseka$^{*}$}
	
	\address{\textbf{Jan Paseka}, Department of Mathematics and Statistics, Faculty of Science, Masaryk University, Kotl\'{a}\v{r}sk\'{a} 2, CZ-611 37 Brno, Czech Republic}
	\email{paseka@math.muni.cz}

	\author[J.J. Feng]{Jianjun Feng}
	\address{\textbf{Jianjun Feng}\\
		School of Mathematical Sciences, South China Normal University, 510631 Guangzhou, China}
	\email{920782067@qq.com}

	\author[Y.D. Chen]{Yudong Chen}
	\address{\textbf{Yudong Chen}\\
		School of Mathematical Sciences, South China Normal University, 510631 Guangzhou, China}
	\email{yudongchen@m.scnu.edu.cn}

	\begin{abstract} In this paper, we show that marked quantales have a reflection into quantales. To obtain the reflection we construct free quantales over marked quantales using appropriate
		lower sets.
		A marked quantale is a posemigroup in which certain admissible subsets are required to have joins, and multiplication distributes over these. Sometimes are the admissible subsets in question specified by means of a so-called selection function.  A distinguishing
		feature of the study of marked quantales is that a small collection of
		axioms of an elementary nature allows one to do much that is traditional at the
		level of quantales. The axioms are sufficiently general to include as
		examples of marked quantales the classes of  posemigroups, $\sigma$-quantales,
		prequantales and quantales. Furthermore, we
		discuss another reflection to quantales obtained by the injective hull of a posemigroup.
		
		
	\end{abstract}
	
	\thanks{$^*$ Corresponding author}
	\maketitle
	
	\section*{Introduction}

This paper is a continuation of our study on injectivity for several classes of ordered structures (see \cite{ZL,ZL2,ZL3,ZP1}) based on quantale-like structures.
The study of such structures dates to the 1930s when
M. Ward and R. P. Dilworth \cite{ward} started their research on residuated
structures, motivated by ring theoretic considerations.

The term ``quantale" was suggested by C.J.~Mulvey at the Oberwolfach Category Meeting \ (see \cite{mulvey}) 
as an attempt to provide a possible setting for constructive foundations of quantum mechanics, as well as to study the spectra of non-commutative
C*-algebras, which are locales in the commutative case. A crucial moment in the development of the theory of quantales was the realization that quantales give  semantics for propositional linear logic in the same way that Boolean algebras provide  semantics for classical propositional logic (see \cite{Y}).

Quantales arise naturally as lattices of ideals, subgroups, or other suitable substructures of algebras. Quantales have many significant applications. As a valued domain,  quantales have been applied to the study of enriched categories (\cite{LZ}), logic (\cite{Y}), logic  algebras (\cite{Resende, RY}), fuzzy algebras, fuzzy topologies, and fuzzy domains (\cite{H}, \cite{W}). There are numerous algebraic and categorical aspects of quantales as well (\cite{CW, KP, R, Stubbe}). Here, we mainly recall their utilization of injectivities and completions.
	
As far as we know, the earliest work on injectivity investigations where quantale concepts were involved  was due to Bruns and Lakser (\cite{Bruns}, 1970) and independently
Horn and Kimura (\cite{horn}, 1971), where injective hulls of semilattices were constructed. In both works, the injective semilattices were characterized by completeness and a distributivity property, precisely frames as one calls afterward.	A similar manner was soon applied into $S$-systems over a semilattice by Johnson, Jr., and McMorris (\cite{JJM}, 1972), where complete lattices in which arbitrary joins are $S$-distributive constitute injective $S$-systems. {The terminologies admissible subsets and $\A$-ideals of a posemigroup defined in the present manuscript are initially derived from the above papers.} Surprisingly, it was only in 2012 that Lambek, Barr, Kennison, and  Raphael (\cite{LB}) introduced the term ``quantale" to the study of injective hulls for pomonoids, firstly came into people's sight for injectivivity research. Later on, Zhang and Laan successively generalized the results of injective pomonoids first to the posemigroup case (\cite{ZL}, 2014) and later to $S$-posets (\cite{ZL2}, 2015) and  ordered $\Omega$-algebras (\cite{ZL3}, 2016). Another approach to injectivity can be found in \cite{ZP1}, by Zhang and Paseka, for $S$-semigroups, where the generalizations in the framework of semicategories were correspondingly studied. In these works, quantale-like structures play an essential  role, named $S$-quantales, sup-algebras, and $S$-semigroup quantales, respectively.

Besides standard (pure) ordered algebras, the implicational part of quantales is formalized by some logic algebras, say, quantum B-algebras (\cite{R1}, \cite{RY}).
The completion of a quantum B-algebra $X$ was constructed so that a quantale can be viewed as an injective hull of $X$ (\cite{R2}). Some concepts for quantales survive in the framework of quantum B-algebras.

Other sources of our study are marked quantales \cite{paseka}	and partial frames \cite{pasekaf}. A small collection of axioms of an elementary nature allows one  to produce a tractable theory and
do much that is traditional at the level of quantales and frames. Examples of marked  quantales and partial frames include 	posemigroups and meet-semilattices,	m-semilattices and distributive lattices, $\sigma$-quantales and
$\sigma$-frames, prequantales and preframes, and quantales  and frames, respectively. Some of these classes apply in the theory of computing  to present programming as a mathematical activity and reasoning on programs as a form of mathematical	logic \cite{scott}.

A marked quantale is a posemigroup in which specific admissible subsets from a given
set $\A$ of subsets of our semigroup  must have joins, and multiplication distributes over these. The admissible  subsets in question are sometimes specified using a so-called selection function.
	
Inspired by the results mentioned above, this paper is determined to find reflectors for the category of quantales from which natural completions arise. Reflectors for the category of sup-lattices have been considered in \cite{KPV}, where cut-continuous pomonoids were investigated. 	The primary	purpose of this manuscript is to develop the theory in one place to its full extent. We obtain two kinds of reflectors in this work, whose reflective 	subcategories are entirely  relied on $\A$-ideals
which are lower sets closed under admissible joins and closed subsets, correspondingly.

It has been observed that $\QQ(S)$, the set of  closed subsets of a given posemigroup $S$, is precisely the $\EL$-injective hull for $S$ (see \cite{ZL} and \cite{XHZ}). We will show in the current work that $\QQ(S)$ indeed carries out a $\QUAN$-reflection (see also \cite{XZH}) and provides an order-embedding preserving functor. Simultaneously, the quantale $\IDA(S)$, the collection of all $\A$-ideals of $S$, contributes to another reflection. It is a free quantale over a marked quantale.	
	
The necessary categorical background can be found in \cite{Joy of cats}; we recommend \cite{EGHK,KP,R} for algebraic notions. For more information on the alternative formalisms and axioms used in the
literature concerning partial frames and  partial quantales, we refer the reader to Section 11 of \cite{frith}. Although we tried to make the paper as self-contained as possible, it is expected from the reader to be acquainted with basic concepts of category theory.

    \section{Preliminaries}

	\subsection{Posemigroups and ${\mathcal A}$-quantales}
	
	In this work, we will pay attention to the category of posemigroups and its subcategory $\QUAN$ of quantales, and some more subcategories between them. As usual, a {\bf posemigroup} $(S,\cdot,\leqs)$ is a semigroup
	$(S,\cdot)$ equipped with a partial ordering $\leqs$ which is
	compatible with the semigroup multiplication, that is, $a_1 a_2\leqs
	b_1 b_2$ whenever $a_1\leqs b_1$ and $a_2\leqs b_2,$ for any
	$a_1,a_2,b_1,b_2\in S$. For posemigroups $S,T$, a monotone mapping $f:S\to T$ which fulfills
	\begin{equation}\label{eq-submul}
		f(a)  f(b)\leqs f(a b),
	\end{equation}
	for all $a,b\in S$, is said to be {\bf submultiplicative} (\cite{LB}, \cite{ZL}). If the inequality in (\ref{eq-submul}) turns out to be an equality, then $f$ is a {\bf posemigroup morphism}. The category of posemigroups with posemigroup morphisms is denoted by $\POSG$.
	
	To simplify the formulation of basic definitions and theorems we have to introduce
	the following denotation.
	Given a semigroup $(S,\cdot)$ one defines the monoid $M(S)$ of $S$ as follows
	\cite{Lallement}:
	$$
	M(S)=\left\{\begin{array}{l l}
		(S,\cdot,1)&\text{if $S$ has an identity $1$;}\\
		(S\cup \{1\},\star,1)&\text{if $S$ does not have an identity; here %
			$x\star y=x\cdot y$,}\\
		& 1\not\in S, x\star 1=1\star x=x, 1\star 1=1\text{ for all } x, y\in S.
		\end{array}\right.
	$$
	 $S^1$  usually denotes the underlying semigroup of $M(S)$.
	
	A structure  $\left(S, \cdot,\bigvee\right)$ is called a {\bf quantale}, if $\left(S, \bigvee\right)$ is a $\bigvee$-semilattice,  $(S, \cdot)$ is a  semigroup, and multiplication distributes over arbitrary joins in both coordinates, that is,
	\begin{equation}\label{eq-1}
		a   \left(\bigvee M\right) = \bigvee (a   M),
	\end{equation}
	and
	\begin{equation}\label{eq-1-2}
		\left(\bigvee M\right)   a = \bigvee (M   a),
	\end{equation}
	for any $a\in S$, $M \subseteq S$.


For every set $X$ we denote by ${\mathcal P}(X)$ the set of all subsets of $X$.
	
\begin{definition}\label{defsel}\em {

A {\bf marked posemigroup} is a pair $(S, {\mathcal A})$, where {$S$ is a posemigroup and}
${\mathcal A}\subseteq {\mathcal P}(S)$,	such that the following conditions hold:
\begin{enumerate}[{\textnormal(\mbox{A}1)}]
\item For all $a\in S$, ${\{a\}}\in {\mathcal A} $.
\item  If $G\in {\mathcal A}$ and  $a, b \in S^1$ then $\{axb \mid x\in G\}\in {\mathcal A} $.
			\end{enumerate}}
		\end{definition}
	
We speak informally about systems {${\mathcal A}$} as {\bf markings} and about
members of 	{${\mathcal A}$} as
{\bf admissible  subsets} of $S$. If necessary, we will use the denotation ${\mathcal A}$-{\bf admissible  subset}. {
	\textbf{Marked posemigroup morphisms} $f : (S, \A) \to(S', \A')$
	are  posemigroup morphisms $f: S \to S'$ that preserve  markings in the sense that $M \in \A \Longrightarrow f(M) \in \A'$.

All marked posemigroups with marked posemigroup morphisms
evidently constitute a concrete category denoted by $\MPOSGR$. There is
 a natural forgetful functor $U$ to the category $\POSG$. }

\begin{proposition}	\label{leftright} The forgetful functor
	$U\colon\MPOSGR\to \POSG$ is faithful and it
	has left and right adjoints.
\end{proposition}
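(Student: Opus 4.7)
The plan is to verify faithfulness directly from the definition, then to exhibit explicit left and right adjoints by choosing, respectively, the smallest and largest markings on a posemigroup.

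For faithfulness, note that a marked posemigroup morphism $(S,\A)\to(S',\A')$ \emph{is} a posemigroup morphism $S\to S'$ satisfying an additional property on $\A$. Hence if two marked morphisms $f,g\colon(S,\A)\to(S',\A')$ satisfy $U(f)=U(g)$, they agree as functions and therefore coincide as marked morphisms; so $U$ is faithful.

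For the left adjoint $L\colon\POSG\to\MPOSGR$, I would set $L(S)=(S,\A_L)$ with
\[
\A_L=\{\{a\}\mid a\in S\}.
\]
Axioms (A1) and (A2) hold since $\{axb\mid x\in\{c\}\}=\{acb\}$, a singleton. On morphisms, $L$ acts as identity on the underlying map. The unit $\eta_S\colon S\to UL(S)$ is $\mathrm{id}_S$. To check the universal property, let $(T,\A')$ be a marked posemigroup and $f\colon S\to U(T,\A')=T$ a posemigroup morphism. The same map $f\colon(S,\A_L)\to(T,\A')$ is automatically a marked morphism, because $f(\{a\})=\{f(a)\}\in\A'$ by (A1) for $\A'$; uniqueness of the lift is clear since the underlying map is forced.

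For the right adjoint $R\colon\POSG\to\MPOSGR$, I would set $R(T)=(T,\A_R)$ with
\[
\A_R=\mathcal{P}(T)
\]
(the maximal marking). Axioms (A1) and (A2) are trivial. The counit $\varepsilon_T\colon UR(T)\to T$ is $\mathrm{id}_T$. Given a marked posemigroup $(S,\A)$ and a posemigroup morphism $g\colon U(S,\A)=S\to T$, the preservation condition $M\in\A\Rightarrow g(M)\in\A_R=\mathcal{P}(T)$ is automatic, so $g$ lifts uniquely to a marked morphism $(S,\A)\to R(T)$. This establishes $U\dashv R$.

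There is no real obstacle here; the construction is essentially forced once one observes that the singletons form the minimum marking while the full power set is a legitimate maximum marking. The verifications to write out are the closure of $\A_L$ under (A2) (a one-line computation) and the two universal properties, both of which reduce to the observation that the marking condition on a morphism is vacuous either because the source marking is trivial or because the target marking is everything.
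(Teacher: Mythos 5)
Your proposal is correct and follows essentially the same route as the paper: the left adjoint is given by the minimal (singleton) marking and the right adjoint by the maximal (power-set) marking, with both universal properties reducing to the observation that the marking-preservation condition is vacuous, exactly as in the paper's functors $E_{\mathbf T}$ and $E_{\mathbf P}$. The only omitted detail, naturality of the bijections, is also dismissed as trivial in the paper.
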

\begin{proof}  Assume that $f,g\colon (S, {\mathcal A})\to (T, {\mathcal B})$ are
	marked posemigroup morphisms such that $Uf=Ug$. Then $f=Uf=Ug=g$.

	For every posemigroup $S$ and every posemigroup morphism
	$f\colon S\to T$, we put
	$E_{\mathbf T}(S)=(S, \{\{x\}\mid x\in S\}), %
	E_{\mathbf P}(S)=(S, \{X\mid X\subseteq S\})$ and
	$E_{\mathbf T}(f)=E_{\mathbf P}(f)=f$.
	
	Clearly, both $E_{\mathbf T}(S)$ and $E_{\mathbf P}(S)$ are marked posemigroups
	and $E_{\mathbf T}(f)$ and $E_{\mathbf P}(f)$ are marked posemigroup morphisms.
	Hence $E_{\mathbf T}$ and $E_{\mathbf P}$ are functors from the category $\POSG$
	to the category $\MPOSGR$.
	
	First, let us prove that $U\dashv E_{\mathbf P}$. Assume that
	$(S, {\mathcal A})$ is a marked posemigroup and $S'$ is a posemigroup.
	Let $f\colon U(S, {\mathcal A}) \to S'$ be a  posemigroup morphism. Then
	$f\colon (S, {\mathcal A}) \to E_{\mathbf P}(S')$ is a marked posemigroup morphism.
	Conversely, let $g\colon (S, {\mathcal A}) \to E_{\mathbf P}(S')$ be a marked posemigroup morphism. Then $g\colon U(S, {\mathcal A}) \to S'$ is
	a  posemigroup morphism. The naturality condition follows trivially.
	
	Similarly, we have  $E_{\mathbf T}\dashv U$.
\end{proof}

\begin{corollary}
	\label{construct}  The category $\MPOSGR$ is a construct with free objects
	and a representable
	forgetful functor 	$\overline{U}$  and hence
	regular wellpowered and regular co-wellpowered.
\end{corollary}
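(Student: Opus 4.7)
The plan is to derive each clause of the corollary formally from Proposition~\ref{leftright}, together with the standard construct structure on $\POSG$ and general facts about representable forgetful functors from \cite{Joy of cats}.

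First, to exhibit $\MPOSGR$ as a construct, I would compose the usual underlying-set functor $V\colon \POSG \to \SET$, which is faithful, with $U$, which is faithful by Proposition~\ref{leftright}; the composite $\overline{U} := V \circ U\colon \MPOSGR \to \SET$ is then faithful. Free objects arise by composing left adjoints: $V$ admits the classical left adjoint $F\colon\SET\to\POSG$ sending a set $X$ to the free semigroup $X^+$ equipped with the discrete order, while $E_{\mathbf T} \dashv U$ by Proposition~\ref{leftright}, so $E_{\mathbf T}\circ F$ is left adjoint to $\overline{U}$. A brief sanity check confirms that $E_{\mathbf T}(F(X))$, namely $X^+$ marked by its singletons, really is the free marked posemigroup on $X$, since axiom~(A1) forces every marked posemigroup morphism to preserve singleton markings automatically.

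Representability of $\overline{U}$ then follows by instantiating this adjunction at the one-element set $\{*\}$:
\begin{equation*}
\mathrm{Hom}_{\MPOSGR}\!\bigl((E_{\mathbf T}\circ F)(\{*\}),(S,\A)\bigr) \;\cong\; \mathrm{Hom}_{\SET}(\{*\}, \overline{U}(S,\A)) \;\cong\; \overline{U}(S,\A),
\end{equation*}
naturally in $(S,\A)$, so that $(E_{\mathbf T}\circ F)(\{*\})$ represents $\overline{U}$.

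For the wellpoweredness assertions I would invoke the general theory in \cite{Joy of cats}: in any construct with a representable forgetful functor, each monomorphism $m\colon (T,\B)\hookrightarrow (S,\A)$ is determined up to isomorphism over its codomain by the subset $\overline{U}(m)(\overline{U}(T,\B))\subseteq \overline{U}(S,\A)$, so only a set of isomorphism classes occur; this gives regular wellpoweredness a fortiori. Dually, regular epimorphisms with domain $(S,\A)$ are controlled by quotients of the set $\overline{U}(S,\A)$. The only genuinely delicate point is locating the exact wellpoweredness theorems in \cite{Joy of cats} that apply verbatim in the marked-posemigroup setting; everything else is a purely formal consequence of Proposition~\ref{leftright}.
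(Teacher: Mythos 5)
Your construction is essentially the paper's: you land on the same free object, the free semigroup $X^{+}$ on $X$ with the discrete order and the singleton marking, and you obtain representability of $\overline{U}$ by evaluating the free-object adjunction at a one-element set, exactly as the paper does. The only cosmetic difference is that the paper builds $(F(X),\mathcal{C})$ by hand and verifies freeness directly, while you assemble it by composing the adjunctions $F\dashv V$ and $E_{\mathbf T}\dashv U$ from Proposition~\ref{leftright}; your remark that axiom (A1) makes singleton markings automatically preserved is precisely why the direct verification works, so the two routes coincide.

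One step is wrong as stated, though the conclusion survives. It is not true that an arbitrary monomorphism $m\colon(T,\B)\to(S,\A)$ in $\MPOSGR$ is determined up to isomorphism over its codomain by the image $\overline{U}(m)(\overline{U}(T,\B))$: the identity maps $(S,\A_0)\to(S,\mathcal{P}(S))$ and $(S,\mathcal{P}(S))\to(S,\mathcal{P}(S))$ are both monomorphisms with the same underlying image, yet they are distinct subobjects, since the identity is not an isomorphism $(S,\mathcal{P}(S))\to(S,\A_0)$ (it fails to preserve the marking in the reverse direction). So "this gives regular wellpoweredness a fortiori" does not follow from that premise. What does suffice is either of the following: (i) regular monomorphisms are equalizers, and equalizers in $\MPOSGR$ are computed on underlying sets with the restricted order, multiplication and marking (cf.\ Proposition~\ref{strongly}), so a regular subobject \emph{is} determined by its underlying subset; or (ii) the fibres of $\overline{U}$ are small, since a given set carries only a set of orders, multiplications and markings, whence $\MPOSGR$ is wellpowered outright. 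Since the paper itself delegates the wellpoweredness clauses to the general theory of \cite{Joy of cats} without argument, this is the only point in your write-up that needs repair; the regular co-wellpoweredness clause is likewise best cited from the general results on constructs with free objects rather than argued via "quotients of the underlying set", since regular epimorphisms in a construct need not be surjective on underlying sets without further justification.
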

\begin{proof}  Let $X$ be a set and $F(X)$ a  semigroup whose elements are all possible
	non-empty finite sequences of elements of $X$, and the operation consists of placing one sequence after another. All elements of  $F(X)$ are pairwise incompatible,
	i.e. they form an antichain. We put
	${\mathcal C}=\{\{x\}\mid x\in F(X)\}$. Evidently,
	$(F(X), {\mathcal C})$ is a free marked posemigroup over the set $X$.

	Since the category $\POSG$ is a construct,  i.e., a category
	of structured sets and structure-preserving
	functions between them,  and we
	have a faithful functor
	from $\MPOSGR$ to $\POSG$ we obtain a faithful functor
	$\overline{U}$ from  $\MPOSGR$  to $\SET$ such that
	$\overline{U}((S, {\mathcal A}))=S$ and $\overline{U}(f)=f$ for
	every marked posemigroup morphism
	$f\colon (S, {\mathcal A})\to (T, {\mathcal B})$.
	Hence $\MPOSGR$ is a construct such that the forgetful functor $\overline{U}$
	is representable by the free object over a singleton set.
\end{proof}

\begin{proposition}	\label{strongly} The category $\MPOSGR$ is
 strongly complete.
 	\end{proposition}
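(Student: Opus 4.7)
The plan is to establish strong completeness by directly constructing small products, equalizers, and arbitrary intersections of subobjects; since strong completeness amounts to completeness together with the existence of arbitrary intersections, these three constructions suffice.

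For products of a family $((S_i, \A_i))_{i \in I}$ of marked posemigroups, I would take the $\POSG$-product $S = \prod_i S_i$ with coordinate-wise multiplication and order, and endow it with the marking
$$
\A = \{M \subseteq S : \pi_i(M) \in \A_i \text{ for every } i \in I\},
$$
where $\pi_i : S \to S_i$ denotes the $i$-th projection. Axiom (A1) is immediate because singletons project to singletons. For (A2), if $G \in \A$ and $a, b \in S^1$ then $\pi_i(\{axb : x \in G\}) = \{\pi_i(a)\, y\, \pi_i(b) : y \in \pi_i(G)\}$ lies in $\A_i$ by (A2) applied componentwise. The projections are automatically marking-preserving, and the universal property carries over from $\POSG$ once one observes that any induced $f : (T, \B) \to (S, \A)$ satisfies $\pi_i(f(M)) = f_i(M) \in \A_i$ for all $M \in \B$.

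For equalizers of a parallel pair $f, g : (S, \A) \rightrightarrows (T, \B)$, I would take the sub-posemigroup $E = \{s \in S : f(s) = g(s)\}$ with marking $\A_E = \{M \in \A : M \subseteq E\}$, together with the obvious inclusion. Axiom (A1) is immediate; for (A2), since $E$ is closed under multiplication one has $\{axb : x \in G\} \subseteq E$ for $a, b \in E$ and $G \in \A_E$, while elements from $E^1 \setminus E$ are treated by noting that the adjoined unit of $E^1$ acts exactly as the unit of $S^1$, which reduces the admissibility check to an instance of (A2) inside $(S, \A)$.

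Intersections of an arbitrary family $(\iota_j : (S_j, \A_j) \hookrightarrow (S, \A))_{j \in J}$ of subobjects are then built by the same template: take the underlying intersection $T = \bigcap_j S_j$ as a sub-posemigroup of $S$ and equip it with the marking of those $M \subseteq T$ that are admissible in every $(S_j, \A_j)$; the verification of the axioms and the universal property proceeds exactly as for equalizers. Combined with products and equalizers, this yields strong completeness. The step I anticipate requiring the most care is the verification of (A2) in the sub-constructions around the adjoined-unit technicality, but it is handled uniformly by the observation above about how the formal identity interacts with $S^1$.
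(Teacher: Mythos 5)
Your proposal is correct and follows the same architecture as the paper's proof---small products, equalizers, and arbitrary intersections of subobjects---and your equalizer and intersection constructions agree with the paper's (the intersection marking is literally $\bigcap_j \A_j$ in both). The genuine divergence is the marking you place on the product. The paper equips $\prod_{i} S_i$ with the ``box'' marking consisting of the sets $\prod_i G_i$ with $G_i\in\A_i$, whereas you take the largest marking for which all projections preserve admissibility, namely $\A=\{M\mid \pi_i(M)\in\A_i\ \text{for all } i\}$. Your choice is the one that actually delivers the universal property: for a cone $f_i\colon (T,\B)\to (S_i,\A_i)$ the tupling $f=\langle f_i\rangle$ sends an admissible $M\in\B$ to the ``graph'' $\{(f_i(m))_{i}\mid m\in M\}$, which satisfies $\pi_i(f(M))=f_i(M)\in\A_i$ and hence lies in your marking, but is in general not a box product (take two factors, a two-element admissible $M$, and $f_1,f_2$ injective), so the induced map need not preserve the paper's box marking. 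In other words, your larger marking is exactly what makes the ``remaining part'' of the product verification go through, at the small price of having to recheck (A1) and (A2) for it, which you do. You also make explicit the interaction of axiom (A2) with the adjoined identity in $E^1$ and $T^1$ for the sub-constructions, a point the paper passes over silently; your observation that the formal identity of the subobject acts as the identity of $S^1$ is the right way to discharge it.
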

 \begin{proof}  We have to show that $\MPOSGR$ is complete and has intersections.
 	Since  products in $\POSG$ always exist and coincide with the usual cartesian product
 	equipped with pointwise order and multiplication it is enough to check that a cartesian product of markings will be a marking again. Let
 	$((S_i, {\mathcal A}_i))_{i\in I}$ be a family of marked
 	semigroups indexed by a set $I$. Let us show that $\bigtimes_{i\in I} {\mathcal A}_i$
 	satisfies conditions (A1) and (A2). Assume first that
 	$a=(a_i)_{i\in I} \in \bigtimes_{i\in I} {S}_i$. Then $\{a_i\}\in  {\mathcal A}_i$
 	for every ${i\in I}$. Hence
 	$\{(a_i)_{i\in I}\}=\bigtimes_{i\in I} \{a_i\}\in \bigtimes_{i\in I} {\mathcal A}_i$ and (A1) is satisfied.
 	
 	Assume now that $G=\bigtimes_{i\in I} G_i\in \bigtimes_{i\in I} {\mathcal A}_i$ and  $a=(a_i)_{i\in I}, b=(b_i)_{i\in I} \in (\bigtimes_{i\in I} {S}_i)^1$.
 	Using the pointwise multiplication we obtain that
 	$\{axb \mid x\in G\}\in \bigtimes_{i\in I} {\mathcal A}_i$  and (A2) is satisfied.
 	Evidently, projections are marked posemigroup morphisms and the remaining part is evident.
 	Hence $\MPOSGR$ has products.
 	
 	Equalizers can be constructed as in sets so we omit the proof.

 	 Let  	$((S_i, {\mathcal A}_i))_{i\in I}$ be a family of marked
 	semigroups indexed by a set $I$ which are subobjects of a marked
 	semigroup $(T, \B)$ via monomorphisms $m_i\colon S_i\to T$.
 	 Evidently, monomorphisms in $\MPOSGR$ are injective
 	marked posemigroup morphisms and we can identify them with inclusions.
 	
 	We show that
 	 $(\bigcap_{i\in I} S_i, \bigcap_{i\in I}{\mathcal A}_i)$ is the
 	 intersection of the family $((S_i, {\mathcal A}_i))_{i\in I}$.
 	
 	Then clearly $(\bigcap_{i\in I} S_i, \bigcap_{i\in I}{\mathcal A}_i)$ is a
 	marked posemigroup and the inclusion
 	$m\colon (\bigcap_{i\in I} S_i, \bigcap_{i\in I}{\mathcal A}_i) \to (T, \B)$
 	is an injective marked posemigroup morphism.

 	Let $j\in I$. Then we have
 	the inclusion $f_j\colon (\bigcap_{i\in I} S_i, \bigcap_{i\in I}{\mathcal A}_i)
 	\to (S_j, {\mathcal A}_j)$ which is clearly an injective marked posemigroup morphism.
 	Moreover, $m=m_j\circ f_j$.
 	
 	Now, let $g\colon (U, \C) \to (T, \B)$ be a marked posemigroup morphism such that
 	it factors through each $m_j$, i.e.,  $g=m_j\circ g_j$. Clearly,
 	$g(U)=g_j(U)\subseteq S_j$ for every $j\in I$. Hence
 	$g(U)\subseteq \bigcap_{i\in I} S_i$ and we have $g=m_j\circ \overline{g}$ where
 	$\overline{g}\colon (U, \C) \to  (\bigcap_{i\in I} S_i, \bigcap_{i\in I}{\mathcal A}_i) $ is a corestriction of $g$. Evidently, 	$\overline{g}$ is
 	a  marked posemigroup morphism.
 	\end{proof} 	
	

	\begin{example}\em {
			For a posemigroup $S$, let ${\mathcal D}$ be a collection such that, a subset $M \subseteq S$ is in ${\mathcal D}$  if
		and only if $a(\bigvee M) b$ exists for all $a, b \in S^1$, and satisfies
			\begin{equation}\label{eq-1-3}
			a   \left(\bigvee M\right)   b
				= \bigvee (a   M   b).
				\end{equation}
			It is evident that ${\mathcal D}$ satisfies conditions (A1)-(A2) and hence $(S,\D)$ is a $\D$-marked posemigroup.

	Note that a posemigroup $S$ is a quantale if and only if each subset in $S$ is
${\mathcal D}$-admissible.}
	\end{example}

\begin{remark} \rm Recall that Han, Xia and Gu introduced in \cite{HXG} the notion
	of a strong $m$-distributive subset  $M$ of a posemigroup $S$ that coincides
	with $M$ being an element of ${\mathcal D}$.
\end{remark}	

\begin{definition}\em {
		A marked posemigroup $(S,\A)$ is said to be a \textbf{marked quantale} if every admissible subset has a join, and for any such admissible subset $M$, $M$ satisfies (\ref{eq-1-3}) for any  $a, b \in S^1$.

\textbf{Marked quantale maps} $f \colon (S, \A) \to(S', \A')$ between marked quantales
$(S, \A)$ and $(S', \A')$ are marked posemigroup morphisms $f$
between marked posemigroups $(S, \A)$ and $(S', \A')$  such that $f(\bigvee M)=\bigvee f(M)$ for any admissible subset $M$ of $S$.

A {\bf sub-marked quantale} $(S, \A)$ of a marked quantale $(S', \A')$ is
a marked quantale $(S,\A)$ such that $S\subseteq S'$  and
the inclusion map $i:S\to S'$ is a marked quantale map
$i:(S,\A)\to (S', \A')$.

All marked quantales with marked quantale maps  constitute a category denoted by $\MQUAN$.
}
\end{definition}
	
\begin{remark}
	{\em Related but different axiom systems of admissible sets appear in
	\cite{pasekaf} or \cite{paseka}. Hence we obtain in this paper similar results on reflectivity as in \cite{paseka}.}
\end{remark}

To specify different types of markings we will need the following pullback based approach.	

\begin{definition}\label{defposel}\em {
		
		A {\bf marked poset} is a pair $(S, {\mathcal B})$, where $S$ is a poset and
		${\mathcal B}\subseteq {\mathcal  P}(S)$,	such that the following condition holds:
		\begin{enumerate}[{\textnormal(\mbox{B}1)}]
			\item For all $a\in S$, ${\{a\}}\in {\mathcal B} $.			
	\end{enumerate}}
\end{definition}

 {
 	\textbf{Marked poset morphisms} $f : (S, \B) \to(S', \B')$ are
 	morphisms $f: S \to S'$ of posets that preserve  markings in the sense that $M \in \B \Longrightarrow f(M) \in \B'$.

	All marked posets with marked poset morphisms constitute a category denoted by $\MPOSet$. Then it has a forgetful functor $V$ to the category $\POSet$.

Moreover, there is a forgetful functor $W\colon\MPOSGR\to  \MPOSet$, forgetting the posemigroup structure.
A marked poset  $(S,\B)$ is said to be a \textbf{marked semilattice} if every admissible subset has a join.

\textbf{Marked semilattice maps} $f \colon (S, \B) \to(S', \B')$ between marked semilattices
$(S, \B)$ and $(S', \B')$ are marked poset morphisms $f$
between marked posets $(S, \B)$ and $(S', \B')$  such that $f(\bigvee M)=\bigvee f(M)$ for any admissible subset $M$ of $S$.

All marked semilattices with marked semilattice maps  constitute a category denoted by $\MSEM$.

Let $A\colon \POSet\to \MPOSet$ be a functor such that $A(X)=(X,{\mathcal A})$
and  $A(f)=f$ for all posets $(X,\leqs), (Y,\leqs)$ and all poset morphisms $f\colon X\to Y$. {We point out that such a functor really exists. For instance, one may choose, for an arbitrary poset $(X,\leqs)$, 
\[
\A=\big\{\{x\} \ \big | \ x\in X\big\},
\]
or
\[
\A=\{M\subseteq X\mid M \text{ is a finite set}\},
\]
or
\[
\A=\{M\subseteq X\mid M \text{ is a set whose cardinal number is at most } n\},
\]
here $n$ is an arbitrary positive integer, etc. }

Now consider the diagram
\[
\xymatrix{
	\textbf{$\MPOSGR(A)$} \ar[rr]^{ } \ar[d]_{ }
	&  & \textbf{$\MPOSGR$} \ar[d]^{W}\\
	\textbf{$\POSet$}\ar[rr]_{A}&  & \textbf{$\MPOSet$}}
\]
Here the category $\MPOSGR(A)$ of specified marked posemigroups $(S,{\mathcal A})$ is the ordinary pullback in the underlying 1-category $\textbf{Cat}$: these exist and are unique, by ordinary abstract nonsense.

Explicitly, $\MPOSGR(A)$ is isomorphic to
$\POSet \mathbin{\stackrel{A}{\times}_{\MPOSet}} \MPOSGR$ that
has objects pairs $((X,\leqs), (S,{\mathcal A}))$ such that $A(X,\leqs) = W(S,{\mathcal A})$
and arrows are pairs $(f, g)$ such that $A(f) = W(g)$.
If we choose $A$ so that the images of $A$ and the forgetful functor $W$ are disjoint, we have
$\POSet \mathbin{\stackrel{A}{\times}_{\MPOSet}} \MPOSGR = \emptyset$.

Similarly, there is a forgetful functor $Q\colon\MQUAN\to  \MPOSet$, forgetting the
marked quantale structure.

Sometimes our chosen subsets depend also on the multiplication.
Let $B\colon \POSG\to \MPOSet$ be a functor such that $A(X)=(X,{\mathcal A})$
and  $A(f)=f$ for
all posemigroups $(X,\cdot,\leqs), (Y,\cdot, \leqs)$ and all posemigroup morphisms $f\colon X\to Y$.

Let us consider the diagram
\[
\xymatrix{
	\textbf{$\MQUAN(B)$} \ar[rr]^{ } \ar[d]_{ }
	&  & \textbf{$\MQUAN$} \ar[d]^{W}\\
	\textbf{$\POSG$}\ar[rr]_{B}&  & \textbf{$\MPOSet$}}
\]
As above,  the category $\MQUAN(B)$ of specified marked quantales $(S,{\mathcal A})$ is the ordinary pullback in the underlying 1-category $\textbf{Cat}$.


}

\begin{note}\label{note1}
	{\em Here are some examples of different  {
			markings}		and their corresponding { marked quantales}.
\begin{enumerate}
	
	\item If trivial (one-element) joins are specified, we have
		all posemigroups with posemigroup morphisms. $\POSG$ denotes the respective category.
	\item In the case that ${\mathcal D}$-joins are specified, we obtain
		all posemigroups with posemigroup morphisms  that are ${\mathcal D}$-admissible.
	 		We speak about ${\mathcal D}$-quantales. $\JQUAN$ is a (non-full) subcategory of $\POSG$.
	\item In the case that all joins are specified, we obtain  the notion of
	a quantale. All quantales with join-preserving posemigroup morphisms constitute a full subcategory $\QUAN$ of $\JQUAN$. Moreover,  $\QUAN$ is  a (generally non-full) subcategory of $\MQUAN$.
	\item In the case that finite non-empty joins are specified, we have
	the notion of an m-semilattice.
	\item In the case that joins of upper bounded finite non-empty
	subsetes  are specified, we have the notion of an mb-semilattice.
	\item If (at most) countable joins are specified, we have the notion of a $\sigma$-quantale.
	\item  If joins of subsets with cardinality less
	than some (regular) cardinal $\kappa$ are
	specified, we have the notion of a  $\kappa$-quantale.
	\item In the case that directed joins are specified, we have the notion of a prequantale.
	\item In the case that  joins of chains are specified, we have the notion of a chain-complete prequantale.
	\item In the case that  joins of countable chains are specified, we have the notion of a $\omega$-chain-complete prequantale.
	\item If joins of upper bounded subsets are specified,  we have the notion of a
	bounded-complete quantale.
	\item If joins of upper bounded and directed subsets are specified,  we
	have the notion of a bounded-complete prequantale.
\end{enumerate}	

}
\end{note}

	A subset $X$ of a poset $S$ is a \textbf{lower subset} of  $S$ if $X\da=X$, where
	$X\da=\{p\in S\mid p\leqs x \text{ for some } x\in X\}$. We denote by $\PP(S)$ the set of all lower subsets of the poset $S$.

	A lower subset $D$ of  {a marked poset} {$(S,\A)$} is called an \textbf{$\mathcal A$-ideal} if, for any admissible subset $M$ of $D$, one has that $\bigvee M \in D$.
	Let us denote by $\IDA{(S)}$  the collection of all $\mathcal A$-ideals of $S$.
	If $(S,\A)$ is a marked semilattice then every $s\da$ is an $\mathcal A$-ideal for every  $s \in S$
	and $\IDA{(S)}$ is a complete lattice .

	Moreover,  	if $(S,\A)$ is a marked quantale then $\IDA{(S)}$ is  a quantale (see Proposition \ref{prop1}).
	We then say that $\IDA{(S)}$ is an \textbf{$\mathcal A$-ideal quantale}.
		In particular, $\p(S)$ is a quantale  equipped with the inclusion as ordering and
	\[
	A \cdot B = \{a  b  \mid  a \in A, \ b \in B\}\da
	\]
	as multiplication (\cite{ZL}).
	
		\begin{remark}	{\em
			For a posemigroup $S$, there exists {three} trivial markings over
			$S$, $\A_0 = \{ \{s\} \mid s \in S \}$,  {$\A_1 = \mathcal{P}(S)$} and $\D$.
			It is easy to check that {$Id_{\mathcal{A}_1}(S)\subseteq Id_{\mathcal{D}} (S)\subseteq Id_{\mathcal{A}_0}(S) =\PP(S)$}.
			Moreover, for every marked quantale $(S, \mathcal A)$,
			$Id_{\mathcal{D}} (S)\subseteq Id_{\mathcal{A}}(S)$, and
			a marked posemigroup $(S, \mathcal A)$ is a marked quantale  if and only if
			$\mathcal A\subseteq \mathcal D$.

			One of the referees asked the following question.
			
			\textbf{Question.} Is there a marking $\A$ over a posemigroup $S$ such that
			$(S, \A)$ is a marked quantale, $\A \neq \A_0$ and $\A\neq\D$?
			
			The answer to this question is positive. Namely, for a posemigroup $S$ with at least 3-element subchain let
				\[
				\A=\left\{\{a,b\}\mid a,b\in S \text{ with } a\leqs b\right\}.
				\]
				Then $\A$ is a marking and $\A_0\subsetneq\A\subsetneq\D.$}
	\end{remark}

	\subsection{Reflectors}

	Recall that for a category $\V$ and a subcategory $\U$, a  \textbf{$\U$-reflection} for a $\V$-object $V$ is a $\V$-morphism
	$r:V \to U$ from $V$ to a $\U$-object $U$ with the following universal property:
	for any $\V$-morphism $f: V \to U'$ from $V$ to a $\U$-object $U'$, there exists a unique $\U$-morphism $f': U \to U'$ such that the triangle
	\[
	\bfig
	\qtriangle/->`->`->/[V`U`U';r`f`f']
	\efig
	\]
	commutes (cf. \cite{Joy of cats}).

	Hence we obtain a functor $R:\V\to \U$ such that it is a left adjoint to the inclusion functor. $R$ is called the \textbf{reflector} or \textbf{localization}.
	
	Recall that every reflector $R$ extends to an endofunctor of  $\V$ making $r$ a natural
	transformation.

	\begin{note}\label{note2}
		{\em In the examples listed above in Note \ref{note1}, the
			{$\mathcal A$-ideal quantale}  consists of the following
			{lower} sets $U$ of the ${\mathcal A}$-quantale $S$:
			\begin{enumerate}
				
				\item all {lower} sets $U$, and we have a classical reflection from $\POSG$ to
				$\QUAN$ given by $S\mapsto \p(S)$ (see \cite{R}),
				
				\item all {lower} sets $U$ closed under ${\mathcal D}$-joins,
				\item the principal {lower} sets $U=a\da$, and we have a trivial
				reflection from $\QUAN$ to
				$\QUAN$ given by $S\mapsto \{a\da \mid a\in S\}$ (see \cite{R}),
				\item the ideals of $S$, and we have a
				reflection from m-semilattices to 	$\QUAN$ given by
				$S\mapsto Id(S)$ (see \cite{paseka}),
				\item all {lower} sets $U$  closed under joins of upper bounded
				finite non-empty subsets,
				\item  the $\sigma$-ideals of $S$, and we have a
				reflection from $\sigma$-m-semilattices to 	$\QUAN$ given by
				$S\mapsto \sigma-Id(S)$ (see \cite{paseka}),
				\item  the $\kappa$-ideals of $S$, %
				\item the Scott-closed {lower} sets $U$, and we have a
				reflection from the category of prequantales  	$\QUAN$ given by
				$S\mapsto \Gamma(S)$ (see \cite{paseka}),
				\item all {lower} sets $U$  closed under joins of chains,
				\item all {lower} sets $U$  closed under joins of countable chains,
				\item all {lower} sets $U$  closed under joins of upper bounded subsets, and
				\item all {lower} sets $U$  closed under joins of upper bounded
				directed subsets.
			\end{enumerate}	
			
		}
	\end{note}

	\begin{remark}\rm Recall that one can find another approach to reflectors in quantales
		in \cite{CW2}.		
	\end{remark}

	\section{Quantale structures}\label{section-2}

	{
		Recall (see \cite{R} Definition 3.1.1) that a \textbf{quantic nucleus} on a quantale $Q$ is a
submultiplicative closure operator on $Q$. The subset
\[
Q_j = \{q\in Q\mid j(q)=q\}
\]
can be made into a quantale called a {\bf quantic quotient} of $Q$ as in \cite{R} or \cite{Resende}.

	Let us focus on the nuclei on $\p(S)$ of a given posemigroup $S$.
	Consider a nucleus $j$ on $\p(S)$.  A lower subset $D$ of $S$ is called  a
	{\bf closed subset}, if $D$ is a fixpoint of $j$. In particular, $j$ is said to be {\bf principal closed} if $s\da$ is a fixpoint of $j$ for all $s\in S$. Recall that Han and Zhao in \cite{HZ} say that
	a nucleus $j$ on the power-set quantale $\mathcal P(S)$ is a {\bf topological closure} if $j(\{s\})=s\da$  for all $s\in S$.

As far as we know, there are mainly the following kinds of typical quantic nuclei defined in the set of lower subsets (or power set) of a posemigroup for constructing its injective hull. They were successively presented by Lambek, Barr, Kennison, and Raphael in \cite{LB}, 2012;
Zhang and Laan in \cite{ZL}, 2014; and Xia, Han, and Zhao in \cite{XHZ}, 2017.

\begin{example}\label{example-n1}\em
 Let $A$ be a pomonoid. For $I\in\p(A)$, define
\[
cl(I) = \bigcap\{a\da\backslash b \da/c\da\mid  aIc \subseteq b\da\},
\]
where $c/b = \bigvee\{x \mid xb \leqs c\}$, and $a\backslash c$ defined similarly for $a,b,c\in A$. Then the set $\mathcal{Q}(A)$ of all closed subsets under the quantic nucleus $cl$ is served to be the injective hull for the pomonoid $A$ in the category of pomonoids with submultiplicative order-preserving mappings (\cite{LB}).
\end{example}

\begin{example}\label{example-n2}\em For any lower subset $I$ of a posemigroup $S$, define
\begin{equation}\label{eq-cl}
\cl(I):=\{ x\in S \mid aIc\subseteq b\da \mbox{ implies }
axc\leqs b \mbox{ for all } a,b,c\in S\}.
\end{equation}
Then the mapping $\cl$ is a quantic nucleus on the quantale
$\PP(S)$, and the relative quotient  $\PP(S)_{\cl}$ is the injective hull for the posemgroup $S$ that satisfying $\cl(s\da)=s\da$, $\forall s\in S$, in the category $\POSGRL$ of posemigroups with submultiplicative order-preserving mappings (\cite{ZL}).
\end{example}

\begin{example}\label{example-n3}\em 	Given a  posemigroup $S$ and a lower subset $D\subseteq S$, consider
\begin{equation}\label{eqPOSG6-1}
\ov{D} =\left \{x\in S \ \big | \  \left(\forall a\in S, \ b,c\in S^1\right) \	b  D c\subseteq a\da\Longrightarrow bxc\leqs a\right\}.
\end{equation}
We call $\ov D$ {\bf the closure} of $D$ and say that $D$ is  closed if $\ov{D}=D$. Then ${}^{-}$ is a principal closed quantic nucleus on $\p(S)$. Moreover, $\p(S)_{-}$, which we denote by $\QQ{(S)}$, is the injective hull of $S$ in the category $\POSGRL$  by  \cite{Erratum} Theorem 3.3.

$\ov{D}$ defined in (\ref{eqPOSG6-1}) is an immediate improvement of $\cl(D)$ defined in (\ref{eq-cl}). In fact, we have noticed and announced this verification when we constructed the injective hull for $S$-semigroups in \cite{ZP1}.
\end{example}

\begin{example}\label{example-n4}\em Nuclei on the power set $\mathcal{P}(S)$ of a given posemigroup $S$ are considered in \cite{XHZ} by Xia, Han and Zhao, where for $X\subseteq S,$
\[
X^{\star}=X^{\text{ul}}\cap X^L\cap X^R \cap X^T,
\]
where
\[
X^{\text{ul}}\stackrel{\vartriangle}{=}\{s\in S: \ \forall b\in S, \ X\subseteq b\da\Rightarrow s\leqs b\},
\]
\[
X^L\stackrel{\vartriangle}{=}\{s\in S: \ \forall a,b\in S, \ Xa\subseteq b\da\Rightarrow sa\leqs b\},
\]
\[
X^R\stackrel{\vartriangle}{=}\{s\in S: \ \forall a,b\in S, \ aX\subseteq b\da\Rightarrow as\leqs b\},
\]
\[
X^T\stackrel{\vartriangle}{=}\{s\in S: \ \forall a,b,c\in S, \ aXc\subseteq b\da\Rightarrow asc\leqs b\}.
\]
\end{example}	

We have to point out that the fundamental quantales $\p(S)$ considered in Examples \ref{example-n1}-\ref{example-n3} are different from $\mathcal{P}(S)$ in Example \ref{example-n4}, as they have different underlying posemigroups. Nevertheless,
the set $X^{\star}=\overline{X^{\star}}=%
\overline{\left(\bigcup\limits_{x \in X} (x \da)\right)}=%
{\left(\bigcup\limits_{x \in X} (x \da)\right)}^{\star}\in \p(S)$ and therefore
 both quantic quotients from Examples \ref{example-n3} and \ref{example-n4} are
isomorphic.
}

	{We will look more closely at the properties of
		principal closed nuclei on $\PP(S)$.}
	
\begin{proposition}\label{prop-topol}
Let $S$ be a posemigroup, $j$ be a principal closed nucleus on $\PP(S)$	and $D\in \PP(S)$.  If $\bigvee D$ exists and $\bigvee D \in j(D)$, then $j(D) =\left (\bigvee D\right) \da$ and $\bigvee j(D) = \bigvee D.$
\end{proposition}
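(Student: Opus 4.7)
The plan is to prove both equalities by a short double-inclusion argument, using only two facts: that $j$ is a closure operator with $j(s\da)=s\da$ for every $s\in S$ (since $j$ is principal closed), and that every fixpoint of $j$ on $\PP(S)$ is in particular a lower set.

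First I would establish $(\bigvee D)\da \subseteq j(D)$. By hypothesis $\bigvee D \in j(D)$, and $j(D)\in \PP(S)$ is a lower subset of $S$ (it is a fixpoint of a nucleus on $\PP(S)$). Consequently every $x\leqs \bigvee D$ also lies in $j(D)$, which yields the inclusion.

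Next I would establish the reverse inclusion $j(D)\subseteq (\bigvee D)\da$. Every $d\in D$ satisfies $d\leqs \bigvee D$, so $D\subseteq (\bigvee D)\da$. Applying $j$ and using monotonicity together with the principal closed property gives
\[
j(D)\subseteq j\bigl((\bigvee D)\da\bigr)=(\bigvee D)\da.
\]
Combining the two inclusions yields $j(D)=(\bigvee D)\da$.

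Finally, for the join statement, the set $(\bigvee D)\da$ has $\bigvee D$ as its greatest element, so $\bigvee (\bigvee D)\da=\bigvee D$, whence $\bigvee j(D)=\bigvee D$. There is no real obstacle here; the argument is essentially a tautology once one observes that $j(D)$, being a fixpoint of $j$, is automatically a lower set so that containing $\bigvee D$ forces it to contain the whole principal down-set.
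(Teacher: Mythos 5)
Your proof is correct and follows essentially the same route as the paper's: both arguments use $D\subseteq(\bigvee D)\da$ together with monotonicity and the principal closed property to get $j(D)\subseteq(\bigvee D)\da$, and the hypothesis $\bigvee D\in j(D)$ together with $j(D)$ being a lower set to get the reverse inclusion. The paper merely packages the two inclusions into a single chain $j(D)\subseteq(\bigvee D)\da\subseteq j(D)\da=j(D)$ and leaves the final join computation implicit, which you spell out.
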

\begin{proof}
By the fact that $D\subseteq(\bigvee D)\da$ and $j$ is  principal closed we obtain that
\[
j(D)\subseteq j\left(\left(\bigvee D\right)\big\downarrow\right)=\left(\bigvee D\right)\big\downarrow\subseteq j(D)\da=j(D).
\]
\end{proof}

On the other side, a nucleus $j$ on $\PP(S)$ satisfying $j(D) =\left (\bigvee D\right) \da$ will provide a representation for a quantale.
	
\begin{proposition}\label{prop-represent.}
Let $S$ be a quantale, $j$ a nucleus on $\PP(S)$ satisfying $j(D)=\left(\bigvee D\right)\da$ for any $D\in\PP(S)$. Then $S \cong \PP(S)_j$ as quantales.
\end{proposition}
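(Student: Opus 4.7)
The plan is to exhibit a concrete isomorphism between $S$ and the quantic quotient $\PP(S)_j$, using the map $a\mapsto a\da$ and its inverse $D\mapsto \bigvee D$.

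First, I would identify the fixpoints of $j$. By hypothesis, for every $D\in\PP(S)$ we have $j(D)=(\bigvee D)\da$, which is a principal lower set. Conversely, since $j$ is idempotent, every principal lower set $a\da=j(\{a\})$ is a fixpoint (as $j(a\da)=j(j(\{a\}))=j(\{a\})=a\da$, or directly $j(a\da)=(\bigvee a\da)\da=a\da$). Hence
\[
\PP(S)_j=\{a\da\mid a\in S\}.
\]

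Next, I would define $\varphi\colon S\to \PP(S)_j$ by $\varphi(a)=a\da$ and $\psi\colon \PP(S)_j\to S$ by $\psi(D)=\bigvee D$ (which exists since $S$ is a quantale). The composite $\psi\varphi$ is the identity on $S$ because $\bigvee(a\da)=a$, and $\varphi\psi$ is the identity on $\PP(S)_j$ since for a fixpoint $D$ we have $\varphi(\psi(D))=(\bigvee D)\da=j(D)=D$. Monotonicity in both directions is clear: $a\leqs b\Leftrightarrow a\da\subseteq b\da$.

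It then remains to check that $\varphi$ preserves the quantale operations. Recall that in $\PP(S)_j$, joins are computed by $\bigvee_i D_i=j(\bigcup_i D_i)$ and multiplication is $D_1\ast D_2=j(D_1\cdot D_2)$. For joins, given a family $\{a_i\}\subseteq S$, the hypothesis yields
\[
\bigvee_i \varphi(a_i)=j\Bigl(\bigcup_i a_i\da\Bigr)=\Bigl(\bigvee\bigcup_i a_i\da\Bigr)\da=\Bigl(\bigvee_i a_i\Bigr)\da=\varphi\Bigl(\bigvee_i a_i\Bigr).
\]
For multiplication, using that $S$ is a quantale and hence $(\bigvee X)(\bigvee Y)=\bigvee\{xy\mid x\in X,y\in Y\}$, I compute
\[
\varphi(a)\ast \varphi(b)=j(a\da\cdot b\da)=\Bigl(\bigvee(a\da\cdot b\da)\Bigr)\da=\bigl((\bigvee a\da)(\bigvee b\da)\bigr)\da=(ab)\da=\varphi(ab).
\]

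The routine points are the two composite identities, and the only step requiring the full quantale hypothesis on $S$ is the multiplication calculation, where two-sided distributivity of $\cdot$ over arbitrary joins is used to identify $\bigvee(a\da\cdot b\da)$ with $ab$. I expect no serious obstacle; the main care needed is to remember that the quantale structure on $\PP(S)_j$ uses $j$ on top of the ambient operations in $\PP(S)$, so each preservation check must pass through an application of the identity $j(D)=(\bigvee D)\da$.
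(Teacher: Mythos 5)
Your proof is correct, but it takes a more self-contained route than the paper, whose entire proof is a one-line appeal to Rosenthal's Theorem 3.1.2 (the correspondence between quantic nuclei on a quantale $Q$ and surjective quantale morphisms out of $Q$): implicitly, the join map $\bigvee\colon\PP(S)\to S$ is a surjective quantale morphism whose associated nucleus is exactly $D\mapsto(\bigvee D)\da$, whence $S\cong\PP(S)_j$. You instead unpack that machinery and verify everything by hand: you identify $\PP(S)_j$ with the principal lower sets, exhibit the mutually inverse monotone maps $a\mapsto a\da$ and $D\mapsto\bigvee D$, and check preservation of joins and of the induced multiplication $D_1\ast D_2=j(D_1\cdot D_2)$, correctly using two-sided distributivity in $S$ to get $\bigvee(a\da\cdot b\da)=ab$. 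This buys transparency and independence from the cited result, at the cost of a little length; the citation buys brevity but hides exactly the computations you carried out. One cosmetic slip: $\{a\}$ need not be a lower set, so the parenthetical $a\da=j(\{a\})$ is not literally meaningful on $\PP(S)$ — but your direct computation $j(a\da)=(\bigvee a\da)\da=a\da$ is the right argument and makes the remark dispensable.
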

	
\begin{proof} The statement follows by \cite{R} Theorem 3.1.2.
\end{proof}

Proposition \ref{prop-topol} and Proposition \ref{prop-represent.} contribute to the following essential  results concerning  principal closed nuclei.

\begin{proposition}\label{new-prop}
Let $S$ be a posemigroup and $j$ be a  principal closed nucleus on $\PP(S)$. Then the mapping $\eta:S\to\PP(S)_j$ given by $\eta(s)=s\da$, $\forall s\in S$, has the following properties:
\begin{enumerate}
\item $\eta$ is an order embedding.
\item $\eta$ preserves all existing meets in $S$.
\item $\eta(S)$ is join-dense in the lattice $\PP(S)_j$.
\end{enumerate}
\end{proposition}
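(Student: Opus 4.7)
The plan is to leverage the assumption that $j$ is principal closed, which ensures that every $s\da$ is a fixed point of $j$ and hence lives in $\PP(S)_j$. With this in place, all three claims become essentially formal consequences of the way closures behave on lower sets.

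For (1), I would observe that the order on $\PP(S)_j$ is inclusion inherited from $\PP(S)$ (since fixed points of a closure operator form a sub-poset). Then $\eta(s)\subseteq \eta(t)$ reduces to $s\da\subseteq t\da$, which in any poset is equivalent to $s\leqs t$. This shows simultaneously that $\eta$ is well-defined into $\PP(S)_j$ (principal closedness), monotone, and order-reflecting.

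For (2), the key preliminary remark is that the fixed-point set of any closure operator on a complete lattice is closed under arbitrary meets and these meets coincide with the ambient ones; in particular, meets in $\PP(S)_j$ coincide with intersections. Then, given a family $(s_i)_{i\in I}$ in $S$ whose meet $s=\bigwedge_i s_i$ exists, the equality $s\da=\bigcap_{i\in I}s_i\da$ is the standard characterisation of infima in a poset by lower bounds. Combining these two facts yields $\eta(s)=\bigcap_i \eta(s_i)=\bigwedge_i \eta(s_i)$ in $\PP(S)_j$.

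For (3), I would recall that joins in $\PP(S)_j$ are computed as $\bigvee_i D_i = j\bigl(\bigcup_i D_i\bigr)$. Given any $D\in \PP(S)_j$, the fact that $D$ is a lower set gives $D=\bigcup_{s\in D}s\da$, so
\[
\bigvee_{s\in D}\eta(s)=j\!\left(\bigcup_{s\in D}s\da\right)=j(D)=D,
\]
where the last equality uses $D\in\PP(S)_j$. Hence every element of $\PP(S)_j$ is a join of elements from $\eta(S)$, proving join-density.

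None of the three steps presents a genuine obstacle; the only subtlety is making sure to invoke \emph{principal closedness} at the right moment (to guarantee $\eta(s)\in\PP(S)_j$ in (1) and to identify $\bigcup_{s\in D}s\da$ as a union of closed fixed points in (3)), and to cite the standard fact that fixed points of a closure operator form a complete lattice whose meets agree with the ambient intersections (used in (2)).
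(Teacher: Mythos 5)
Your proposal is correct and follows essentially the same route as the paper: meets of fixed points of a closure operator are the ambient intersections together with $(\bigwedge M)\da=\bigcap_{m\in M}(m\da)$ for (2), and $D=j(D)=j\bigl(\bigcup_{d\in D}d\da\bigr)=\bigvee\eta(D)$ for (3). The only difference is that you spell out part (1) and the well-definedness of $\eta$ via principal closedness, which the paper dismisses as obvious.
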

	
\begin{proof}
(1) $\eta$ being an order embedding is obvious.
		
(2) Assume that  $\bigwedge M$ exists in $S$ for $M\subseteq S$. It is easy to see that $(\bigwedge M )\da = \bigcap\limits_{m \in M} (m\da)$. The result hence follows by the fact that infs in $\PP(S)$ are intersections.

(3)  Let $D \in \PP(S)_j$. Then
\[
D = j(D) = j(D \da) = j\left(\bigcup\limits_{d \in D} (d \da)\right) = \bigvee \eta (D).
\]
This proves that $\eta(S)$ is join-dense in $\PP(S)_j$.
\end{proof}

In the rest of the paper, we mainly concentrate on two kinds of quantic quotients of $\PP(S)$ whose induced nuclei satisfy the conditions of Proposition \ref{new-prop}.
	

{ One of the quantic quotient of $\p(S)$  is $\QQ{(S)}$ we presented in Example \ref{example-n3},} another quantic quotient we will focus on is  $\IDA{(S)}$. We will need the following well-known statement.

\begin{lemma}\label{lemma-1}  {\em(\cite{R} Proposition 3.1.2)}\label{lemm2}
Let $Q$ be a quantale and $S \subseteq Q$. If $S$ is closed under meets and $a\rightarrow_r s$ and $a\rightarrow_l s$ are in $S$, whenever $a \in Q$ and $s \in S$, then $S = Q_j$ for the quantic nucleus $j$ on $Q$, defined by $j(a)=\bigwedge\{s\in S\mid a\leqs s\}$.
\end{lemma}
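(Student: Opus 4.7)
The plan is to take the stated definition $j(a) = \bigwedge\{s \in S \mid a \leqs s\}$ at face value and verify the defining properties of a quantic nucleus together with the identification of its fixpoint set. Two hypotheses play distinct roles: closure of $S$ under arbitrary meets makes $j$ a well-defined closure operator with values in $S$, while stability of $S$ under both residuals is used precisely once, to obtain submultiplicativity.

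First I would record that $j(a) \in S$ for every $a \in Q$, since $S$ is meet-closed. Extensivity $a \leqs j(a)$ and monotonicity of $j$ are immediate from the definition. Idempotence is automatic: as $j(a) \in S$ sits inside $\{s \in S \mid j(a) \leqs s\}$ and is clearly its minimum, one has $j(j(a)) = j(a)$. Next I would identify the fixpoints: for $s \in S$ the element $s$ itself is the minimum of $\{t \in S \mid s \leqs t\}$, giving $j(s) = s$; conversely, $j(a) \in S$ always, so any fixpoint $a = j(a)$ lies in $S$. This proves $S = Q_j$ at the level of underlying sets, modulo establishing that $j$ is genuinely a nucleus.

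The core step is submultiplicativity $j(a) \cdot j(b) \leqs j(a \cdot b)$. I would fix $s \in S$ with $a \cdot b \leqs s$ and perform two successive residuation moves. From $a \cdot b \leqs s$ and the adjunction for left multiplication by $a$, one gets $b \leqs a \rightarrow_r s$; by hypothesis $a \rightarrow_r s \in S$, and since $j(b)$ is the smallest element of $S$ above $b$, this forces $j(b) \leqs a \rightarrow_r s$, equivalently $a \cdot j(b) \leqs s$. Applying the other adjunction now gives $a \leqs j(b) \rightarrow_l s$; by hypothesis $j(b) \rightarrow_l s \in S$, hence $j(a) \leqs j(b) \rightarrow_l s$, i.e., $j(a) \cdot j(b) \leqs s$. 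Taking the meet over all such $s \in S$ yields $j(a) \cdot j(b) \leqs j(a \cdot b)$, as required.

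The main obstacle, such as it is, lies in arranging these two residuation steps in the correct order and noticing that the second hypothesis is tailored exactly to let $j(b)$ replace $b$ and then $j(a)$ replace $a$ without leaving $S$. Everything else is formal: once submultiplicativity is proved, $j$ is a quantic nucleus on $Q$, and the fixpoint computation above immediately identifies $Q_j$ with $S$.
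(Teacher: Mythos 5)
Your proof is correct. The paper itself gives no argument for this lemma, only the citation to Rosenthal's Proposition 3.1.2, and your verification (meet-closure yields a closure operator landing in $S$ with fixpoint set exactly $S$; the two residuation hypotheses are used once each to replace $b$ by $j(b)$ and then $a$ by $j(a)$ in the inequality $ab\leqs s$, giving submultiplicativity) is precisely the standard proof found there.
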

	
\begin{proposition}\label{prop1} Let $(S, {\mathcal A})$ be a marked quantale. Then $\IDA{(S)}$ is a  quantic quotient of $\p(S)$.
\end{proposition}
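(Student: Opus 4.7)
The plan is to apply Lemma \ref{lemma-1} directly to the complete lattice $\IDA(S) \subseteq \p(S)$: I need to check that $\IDA(S)$ is closed under arbitrary meets in $\p(S)$ (which are intersections, since $\p(S)$ is ordered by inclusion), and that for every $A \in \p(S)$ and every $D \in \IDA(S)$, both residuals $A \to_r D$ and $A \to_l D$ lie in $\IDA(S)$. Then the lemma automatically produces the quantic nucleus $j$ on $\p(S)$ with $\p(S)_j = \IDA(S)$, given explicitly by $j(A) = \bigcap\{D \in \IDA(S) \mid A \subseteq D\}$, i.e., the $\A$-ideal generated by $A$.

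Closure under intersections is immediate: if $(D_i)_{i\in I}$ is any family in $\IDA(S)$, then $\bigcap_i D_i$ is a lower set, and whenever $M \in \A$ with $M \subseteq \bigcap_i D_i$, we have $M \subseteq D_i$ for every $i$, hence $\bigvee M \in D_i$ for every $i$, and so $\bigvee M \in \bigcap_i D_i$.

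For the residuals I first unfold them. Using the multiplication $A \cdot B = \{ab \mid a \in A, b \in B\}\da$ on $\p(S)$ together with the fact that $D$ is a lower set, one computes
\[
A \to_r D = \{x \in S \mid ax \in D \text{ for all } a \in A\}, \qquad A \to_l D = \{x \in S \mid xa \in D \text{ for all } a \in A\}.
\]
Both are lower sets since $D$ is. To check the $\A$-closure of $A \to_r D$, fix $M \in \A$ with $M \subseteq A \to_r D$ and any $a \in A$. By condition (A2), taking the two $S^1$-multipliers to be $a$ and $1$, the set $aM = \{a \cdot m \cdot 1 \mid m \in M\}$ is admissible. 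Since $(S,\A)$ is a marked quantale, equation (\ref{eq-1-3}) (with multiplier $b = 1$) yields $a \cdot \bigvee M = \bigvee(aM)$. As each $am$ lies in $D$ (because $m \in A \to_r D$), we have $aM \subseteq D$, and since $D$ is an $\A$-ideal, $a \cdot \bigvee M = \bigvee(aM) \in D$. This holds for every $a \in A$, so $\bigvee M \in A \to_r D$. The argument for $A \to_l D$ is symmetric, using (A2) with multipliers $1$ and $a$.

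The main obstacle — really the only place the marked-quantale hypothesis does substantial work — is the step showing $\bigvee M \in A \to_r D$: it rests on the combination of (A2) (guaranteeing admissibility of $aM$, without which the statement $\bigvee(aM) \in D$ would be meaningless) and the distributivity law (\ref{eq-1-3}) (which matches $\bigvee(aM)$ with $a \cdot \bigvee M$). Everything else is formal and an appeal to Lemma \ref{lemma-1} completes the proof.
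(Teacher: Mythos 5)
Your proposal is correct and follows essentially the same route as the paper: closure of $\IDA(S)$ under intersections, then verification that $C\rightarrow_r D$ and $C\rightarrow_l D$ are $\A$-ideals by using (A2) to get admissibility of $cM$ and the distributivity law to identify $c\bigvee M$ with $\bigvee(cM)\in D$, followed by an appeal to Lemma \ref{lemma-1}. The only cosmetic difference is that you unfold the residual elementwise at the outset, whereas the paper works with $K=\bigcup\{A\in\p(S)\mid C\cdot A\subseteq D\}$ directly.
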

	
\begin{proof} $\IDA{(S)}$ is closed under intersections. By Lemma \ref{lemma-1}, we are going to show that $C\rightarrow_r D$ and $C\rightarrow_l D$ are in $\IDA{(S)}$ for any
$C \in \p(S)$, $D \in \IDA{(S)}$. We only prove that $C\rightarrow_r D\in\IDA{(S)}$. The other case can be similarly obtained.
		
		Notice that $C\rightarrow_r D=\bigcup \{ A \in \p(S) \mid C \cdot A \subseteq D \}$. Write $K=\bigcup \{ A \in \p(S) \mid C \cdot
		A \subseteq D \}$. It is easy to see that $K$ is a lower subset.
		Let $E\subseteq K$ with $E$ being admissible.
		We will prove that $\bigvee E\in K$, i.e.,
		$C\cdot\bigvee E\subseteq D$. Since  $D$ is a lower set, we only need to show that
		$c(\bigvee E)\in D$ for any $c\in C$. Since $E$ is
		admissible, we immediately get by (A2) that $c E$ is admissible.
		By the fact that $c E\subseteq D$ and $D$ is an $\A$-ideal
		we have $c(\bigvee E)=\bigvee (c E) \in D$. We conclude that $\bigvee E \in K$.
		Hence $K\in\IDA{(S)}$.
	\end{proof}

By Lemma \ref{lemma-1} we can find a quantic nucleus $\jj: \p(S)\to \p(S)$, defined by
\begin{equation}\label{eq-2}
\jj(C) = \bigcap \{ D \in \IDA{(S)} \mid C \subseteq D \},
\end{equation}
for every $C \in \p(S)$ such that $\IDA{(S)} = \p(S)_{\ \jj{} \ }$.	We obtain that $\IDA{(S)}$ is a quantale whose multiplication is
\[
V \cdot_{\IDA{(S)}} W =\jj(V\cdot W)= \bigcap \{ D \in \IDA{(S)} \mid V \cdot W \subseteq D \},
\]
for any $V, W \in \IDA{(S)}$, with join being
\[
\bigvee_{i \in I} D_i = \jj\left(\bigcup_{i \in I}D_i\right) =\bigcap \left\{ D \in \IDA{(S)} \ \Bigg | \ %
	\bigcup_{i \in I} D_i \subseteq D \right\},
\]
and meet being the intersection, for any family of $\mathcal A$-ideals $D_i$ of $S$,  $i \in I$.
	
\begin{proposition}\label{prop-topo-D}
For a 	marked quantale $(S, {\mathcal A})$,
the quantic nucleus $ \jj: \p(S)\to \p(S)$ defined in (\ref{eq-2}) is principal closed.
\end{proposition}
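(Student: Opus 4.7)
The plan is to show directly that every principal lower set $s\da$ is a fixpoint of $\jj$, which by definition makes $\jj$ principal closed. Since $\jj(C)$ is defined as the intersection of all $\mathcal{A}$-ideals containing $C$, once we verify that $s\da$ is itself an $\mathcal{A}$-ideal, it is automatically the smallest such set containing itself and hence $\jj(s\da) = s\da$.

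Therefore the single thing to prove is that $s\da \in \IDA(S)$ for every $s\in S$. The fact that $s\da$ is a lower set is immediate from its definition, so the substantive step is the admissible-join condition. I would argue as follows: let $M$ be any admissible subset with $M\subseteq s\da$. Since $(S,\A)$ is a marked quantale, the join $\bigvee M$ exists in $S$ by definition. The containment $M \subseteq s\da$ says exactly that $s$ is an upper bound of $M$ in $S$, and so $\bigvee M \leqs s$, i.e.\ $\bigvee M \in s\da$. This verifies the $\mathcal{A}$-ideal condition.

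Putting the two observations together, $s\da$ belongs to the family $\{D \in \IDA(S) \mid s\da \subseteq D\}$ and is obviously the smallest member (being equal to the chosen set $C=s\da$), so
\[
\jj(s\da) = \bigcap \{ D \in \IDA(S) \mid s\da \subseteq D \} = s\da,
\]
which is the required fixpoint property. I do not expect any real obstacle: the only place where the structure of a marked quantale is used is in guaranteeing the existence of $\bigvee M$ for admissible $M$, after which the monotonicity of the order handles the rest. No appeal to the explicit form of the multiplication on $\IDA(S)$ or to Lemma \ref{lemma-1} is needed for this particular verification.
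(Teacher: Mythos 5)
Your argument is correct and follows exactly the paper's own route: reduce the fixpoint property to the claim that $s\da$ is an $\mathcal{A}$-ideal, and then observe that any admissible $M\subseteq s\da$ has $s$ as an upper bound, so $\bigvee M\leqs s$ and $\bigvee M\in s\da$. Nothing further is needed.
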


\begin{proof}
We have to show that $\jj(s\da)=s\da$ for each $s\in S$. This is equivalent to proving $s\da \in \IDA{(S)}$. But for each  admissible subset $M$ in $s\da$, it is evident that $m \leqs s$ for every $m \in M$. Therefore, $\bigvee M \leqs s$ and then $\bigvee M \in s\da$.
\end{proof}

A natural result for the nucleus $\mathsf{j}_{\D}$  on $\p(S)$ is then obtained by Proposition \ref{prop-represent.}.
	
\begin{corollary}\label{lemm-n2-D}
Let $S$ be a quantale. Then $S$ is isomorphic to the quantale $Id_{\mathcal D}{(S)}$.
\end{corollary}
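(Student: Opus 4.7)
The plan is to apply Proposition \ref{prop-represent.}, which reduces the task to verifying that the nucleus $\mathsf{j}_{\D}$ on $\p(S)$ satisfies $\mathsf{j}_{\D}(D) = (\bigvee D)\da$ for every $D \in \p(S)$. The key observation supporting this is that since $S$ is a quantale, every subset of $S$ is $\D$-admissible (exactly the equivalence stated in the example following the definition of marked posemigroup). Thus every join in $S$ is automatically an admissible one, and Proposition \ref{prop-topo-D} applies to $\mathsf{j}_{\D}$.

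First I would establish the inclusion $\mathsf{j}_{\D}(D) \subseteq (\bigvee D)\da$: clearly $D \subseteq (\bigvee D)\da$, and by Proposition \ref{prop-topo-D} the principal lower set $(\bigvee D)\da$ belongs to $Id_{\D}(S)$, so the defining formula (\ref{eq-2}) for $\mathsf{j}_{\D}(D)$ as the intersection of all $\D$-ideals containing $D$ yields the desired inclusion.

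For the reverse inclusion $(\bigvee D)\da \subseteq \mathsf{j}_{\D}(D)$, I would use the fact that $\mathsf{j}_{\D}(D)$ is itself a $\D$-ideal containing $D$. Because $S$ is a quantale, $D$ is an admissible subset of $\mathsf{j}_{\D}(D)$, so closure of $\mathsf{j}_{\D}(D)$ under admissible joins forces $\bigvee D \in \mathsf{j}_{\D}(D)$; being a lower set then gives $(\bigvee D)\da \subseteq \mathsf{j}_{\D}(D)$.

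Combining these two inclusions with Proposition \ref{prop-represent.} immediately gives $S \cong \p(S)_{\mathsf{j}_{\D}} = Id_{\D}(S)$ as quantales. The main (and only) substantive point is the matching of the $\D$-admissibility condition on $S$ with the hypothesis of Proposition \ref{prop-represent.}; everything else is essentially a bookkeeping check that the machinery from Propositions \ref{prop-topo-D} and \ref{prop-represent.} applies to this specific nucleus.
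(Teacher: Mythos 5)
Your proof is correct and takes essentially the same route as the paper: the corollary there is obtained by applying Proposition \ref{prop-represent.} to the nucleus $\mathsf{j}_{\D}$, using that $\mathsf{j}_{\D}$ is principal closed (Proposition \ref{prop-topo-D}) and that every subset of a quantale is $\D$-admissible, which is exactly the verification $\mathsf{j}_{\D}(D)=\left(\bigvee D\right)\da$ you carry out. The only cosmetic difference is that you inline the argument of Proposition \ref{prop-topol} rather than citing it.
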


\section{A reflector for  $\QUAN$ induced by $\AQUAN$}\label{section-3}
	
We are ready to present a reflector for the category of quantales according to  $\A$-ideals.
	
\begin{theorem}\label{theo-D-5}
Let   {$(S,\A)$} be a {marked} quantale. Then the mapping {$t: (S,\A) \to (\IDA{(S),\mathcal{P}({\IDA(S)}))}$}
defined by $t(s) = s \da$, $\forall s \in S$ is a {$\MQUAN$}-morphism, which is also an order-embedding and preserves all existing meets in $S$. In addition, $t(S)$ is join-dense in the lattice $Id_{\mathcal{A}}(S)$.
\end{theorem}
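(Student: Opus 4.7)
The plan is to leverage Proposition \ref{new-prop} and Proposition \ref{prop-topo-D} to dispatch most of the assertions immediately, and then verify the remaining marked-quantale-morphism axioms by direct computation with the nucleus $\jj$.

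Since $(S,\A)$ is a marked quantale, Proposition \ref{prop-topo-D} tells us that the induced nucleus $\jj$ on $\p(S)$ is principal closed, i.e.\ $\jj(s\da)=s\da$ for every $s\in S$. Consequently Proposition \ref{new-prop} applies to $\jj$ and instantly yields that the map $\eta=t\colon S\to \p(S)_{\jj}=\IDA(S)$, $s\mapsto s\da$, is an order embedding, preserves all existing meets in $S$, and has join-dense image in $\IDA(S)$. So the bulk of the statement is handed to us for free, and I only need to verify the three clauses making $t$ a $\MQUAN$-morphism: marking preservation, multiplicativity, and preservation of admissible joins.

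Marking preservation is trivial because the target marking is $\mathcal{P}(\IDA(S))$, which already contains every subset. For multiplicativity, I would first observe that inside $\p(S)$ one has
\[
a\da \cdot b\da \;=\; \{xy\mid x\leqs a,\ y\leqs b\}\da \;=\; (ab)\da,
\]
since one direction follows from the compatibility of $\leqs$ with multiplication and the other from the fact that $ab$ itself lies in the left-hand set. Applying $\jj$ and using $\jj((ab)\da)=(ab)\da$ gives
\[
t(a)\cdot_{\IDA(S)} t(b)=\jj(a\da\cdot b\da)=\jj((ab)\da)=(ab)\da=t(ab).
\]

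The key clause is preservation of admissible joins. Given an admissible $M\in\A$, I would compute
\[
\bigvee\nolimits_{\IDA(S)} t(M)=\jj\Bigl(\bigcup_{m\in M} m\da\Bigr)=\jj(M\da)
\]
and then show $\jj(M\da)=(\bigvee M)\da$. One inclusion: $(\bigvee M)\da$ is a principal lower set, hence an $\A$-ideal by Proposition \ref{prop-topo-D}, and it contains $M\da$, so $\jj(M\da)\subseteq(\bigvee M)\da$. The other inclusion is the step where being an $\A$-ideal is really used: $\jj(M\da)$ is itself an $\A$-ideal containing $M$; since $M$ is admissible, the defining property of an $\A$-ideal forces $\bigvee M\in \jj(M\da)$, hence $(\bigvee M)\da\subseteq \jj(M\da)$.

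The only conceptual subtlety—and thus the point most worth flagging—is this last use of the $\A$-ideal axiom: one must remember that admissibility is preserved under passing from $M\subseteq S$ to the same set viewed inside the $\A$-ideal $\jj(M\da)$, which is immediate because admissibility is a property of the subset of $S$, not of the ambient $\A$-ideal. Everything else is a routine unpacking of the definitions of the quantale operations on $\IDA(S)$.
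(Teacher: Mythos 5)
Your proposal is correct and follows essentially the same route as the paper: both reduce the order-embedding, meet-preservation, and join-density claims to Proposition \ref{new-prop} (via the principal closedness of $\jj$ from Proposition \ref{prop-topo-D}), and both verify $t(ab)=t(a)\cdot_{\IDA(S)}t(b)$ from the identity $(a\da)\cdot(b\da)=(ab)\da$ and $t(\bigvee M)=\bigvee t(M)$ from the fact that any $\A$-ideal containing the admissible set $M$ must contain $\bigvee M$. Your phrasing of the join step through the single minimal ideal $\jj(M\da)$ rather than through all ideals in the defining intersection is only a cosmetic repackaging of the paper's argument.
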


{
\begin{proof}
We only need to show that $t$ is a {$\MQUAN$}-morphism.
The other assertions follow 	directly by Proposition \ref{new-prop}.
		
To verify that $t$ is a {$\MQUAN$}-morphism,
we start from showing that $t$ preserves the multiplication, i.e., $t(a b)=t(a) \cdot_{\IDA(S)}  t(b)$ for any $a, b \in S$. Observe that
\[
t(a) \cdot_{\IDA{(S)}} t(b) = \bigcap \{ I \in \IDA{(S)} \mid (a \da) \cdot (b \da) \subseteq I \},
\]
and $(a \da) \cdot (b \da)= (a b) \da$. So $(a b) \da$ is   one of the terms in the intersection that defines $t(a) \cdot_{\IDA{(S)}} t(b)$. Hence $t(a) \cdot_{\IDD{(S)}} t(b)\subseteq  (ab)\da=t(ab).$
Conversely, we have
\[
t(a b)=t(a)t(b)\subseteq  \mathsf{j}_\A(t(a)t(b))=t(a)\cdot_{\IDA{(S)}} t(b),
\]
which   turns out that  $t(a b)=t(a) \cdot_{\IDA(S)}  t(b)$.

Next, we calculate  $t\left(\bigvee M \right) = \bigvee\limits^{\IDA(S)} t(M)$ for any admissible subset $M$ of $S$. Since
\[
\bigvee  t(M)=\bigcap \left\{ I \in \IDA{(S)} \ \Bigg | \ \bigcup\limits_{m\in M} (m \da)\subseteq I \right\},
\]
it is easy to see that $ \bigvee  t(M) \subseteq t\left(\bigvee M \right)$. Let
$I \in \IDA{(S)}$ and
 $\bigcup\limits_{m \in M} (m \da) \subseteq I$. Then $M \subseteq I$
 and thus $\bigvee M\in I$. As a consequence, we obtain
\[
t\left(\bigvee M\right)=\left(\bigvee M\right)\Big \downarrow\subseteq %
\bigcap \left\{ I \in \IDA{(S)} \ \Bigg | \ \bigcup\limits_{m\in M} (m \da)\subseteq I \right\}\subseteq \bigvee  t(M).
\]	
		
Evidently, $t$ preserves the marking. As a result, we finally arrive that $t$ is a {$\MQUAN$}-morphism.
\end{proof}}
	
\begin{remark}\label{remark-1}\em The order embedding $t: S \to \IDA{(S)}$ in
	Theorem \ref{theo-D-5} fails to preserve all joins that exist in $S$, and the set $t(S)$ is
	not inf-dense in the lattice $\IDA(S)$, see Example \ref{example-1}.
	\end{remark}

The following theorem shows that $\IDA{}$ is left adjoint to the forgetful functor from quantales to marked quantales. In other words, $\IDA{(S)}$  is the free quantale over the	marked quantale $(S,\A)$.
	
	\begin{theorem}\label{theo-reflection-D}
		Let $(S,\A)$ be  a marked quantale.  Then the morphism $t:S \to \IDA{(S)}$,  defined as in Theorem \ref{theo-D-5}, is a $\QUAN$-reflection for $(S,\A)$.
	\end{theorem}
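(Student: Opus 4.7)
The plan is to verify the universal property of the reflection by constructing, for a quantale $Q$ and a marked quantale morphism $f:(S,\A)\to (Q,\mathcal{P}(Q))$, the unique quantale morphism $\bar f\colon \IDA(S)\to Q$ with $\bar f\circ t=f$. I would set
\[
\bar f(D)=\bigvee_{d\in D}f(d)
\]
for each $D\in \IDA(S)$; this exists because $Q$ is a quantale. The factorization $\bar f\circ t=f$ is immediate: $t(s)=s\!\da$, and since $f$ is monotone with $s\in s\!\da$, one has $\bar f(s\!\da)=\bigvee_{x\leqs s}f(x)=f(s)$.

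The main technical step, and the real obstacle, will be showing $\bar f$ descends correctly through the quantic nucleus $\jj$, i.e., the key lemma that for every lower set $C\in \PP(S)$,
\[
\bigvee_{x\in \jj(C)} f(x)=\bigvee_{c\in C} f(c).
\]
To prove this, write $u=\bigvee_{c\in C}f(c)\in Q$ and consider
\[
D_0=\{s\in S\mid f(s)\leqs u\}.
\]
Monotonicity of $f$ gives that $D_0$ is a lower set; and since $f$ preserves admissible joins (as a marked quantale map), for any admissible $M\subseteq D_0$ one has $f(\bigvee M)=\bigvee f(M)\leqs u$, whence $\bigvee M\in D_0$. Thus $D_0$ is an $\A$-ideal containing $C$, so $\jj(C)\subseteq D_0$, giving the nontrivial inequality; the other is trivial.

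With this lemma in hand, preservation of arbitrary joins by $\bar f$ follows, because the join in $\IDA(S)$ of a family $\{D_i\}_{i\in I}$ is $\jj\bigl(\bigcup_i D_i\bigr)$, and applying the lemma yields $\bar f\bigl(\bigvee_i D_i\bigr)=\bigvee_{d\in \bigcup_i D_i}f(d)=\bigvee_i \bar f(D_i)$. Preservation of multiplication follows likewise: $\bar f(D\cdot_{\IDA(S)} E)=\bar f(\jj(D\cdot E))=\bigvee_{d\in D,\,e\in E} f(d)f(e)=\bigl(\bigvee_{d\in D} f(d)\bigr)\bigl(\bigvee_{e\in E} f(e)\bigr)=\bar f(D)\cdot \bar f(E)$, using that $f$ is a posemigroup morphism and that multiplication in $Q$ distributes over arbitrary joins on both sides.

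Finally, uniqueness follows from the join-density of $t(S)$ in $\IDA(S)$ recorded in Theorem \ref{theo-D-5}: any quantale morphism $g\colon \IDA(S)\to Q$ satisfying $g\circ t=f$ must agree with $\bar f$ on every principal $\A$-ideal $s\!\da$, and then on every $D\in \IDA(S)$ by
\[
g(D)=g\Bigl(\bigvee_{d\in D} t(d)\Bigr)=\bigvee_{d\in D} g(t(d))=\bigvee_{d\in D} f(d)=\bar f(D).
\]
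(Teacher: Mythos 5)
Your proposal is correct and follows essentially the same route as the paper: the same definition of $\bar f(D)=\bigvee f(D)$, the same key device (the $\A$-ideal $\{s\in S\mid f(s)\leqs u\}$, which the paper introduces twice as $\kk$ and $\widetilde{\kk}$ for the multiplication and join cases separately), and the same uniqueness argument via join-density of $t(S)$. Your only departure is to package that device once as a general lemma about $\jj$ applied to an arbitrary lower set, which is a clean but inessential streamlining of the paper's argument.
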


	\begin{proof}Let $Q$ be a quantale, $f:S\to Q$ a marked quantale morphism. We are going to show that there exists a unique $\QUAN$-morphism $g: \IDA{(S)} \to Q$ such that the following diagram commutes.
		\[
		\bfig\qtriangle/->`->`-->/[S`\IDA{(S)}`Q;t`f`g]\efig
		\]
		
		Define $g: \IDA{(S)} \to Q$ by $g(D)= \bigvee  f(D)$, for every $\A$-ideal $D$ of $S$.
		We claim that $g$ is a $\mathsf{Quant}$-morphism. For this purpose, we first show that for any $\A$-ideals $D_1, D_2$ of $S$, we have $g(D_1) g(D_2)=g\left(D_1 \cdot_{\IDA{(S)}} D_2\right)$. Observe that
		\[
		g\left(D_1 \cdot_{\IDA{(S)}} D_2\right)=\bigvee f\left(D_1 \cdot_{\IDA{(S)}} D_2\right),
		\]
		where $D_1 \cdot_{\IDA{(S)}} D_2=\bigcap \{ D \in \IDA{(S)} \mid D_1 \cdot D_2 \subseteq D \}$, and
		\[
		g(D_1)   g(D_2) = \bigvee  f(D_1 D_2),
		\]
		as $Q$ being a quantale and $f$ preserves multiplication. Obviously, $D_1   D_2 \subseteq D_1 \cdot_{\IDA{(S)}} D_2$. Hence, we conclude
		$
		g(D_1) g(D_2) \subseteq g(D_1 \cdot_{\IDA{(S)}} D_2).
		$
		
		To prove the remaining inclusion, consider the subset
		\[
		\kk=\left\{ x \in S \ \Big | \ f(x) \leqs \bigvee  %
		f(D_1 \cdot D_2)\right\}
		\]
		of $S$. Evidently, $\kk$ is a lower subset of $S$. Let
		$M \subseteq \kk$ with $M$ being admissible. Hence
		$f(\bigvee M)=\bigvee f(M)\leqs \bigvee f(D_1 \cdot D_2)$, i.e.,
		$\bigvee M \in \kk$.  We conclude that 	$\kk$ is an $\A$-ideal of $S$.

		Apparently, $D_1 \cdot D_2 \subseteq \kk$. This indicates that $D_1 \cdot_{\IDA{(S)}} D_2\subseteq \kk$ since $\kk$ is an $\A$-ideal. It hence follows that
		\[
		g(D_1 \cdot_{\IDA{(S)}} D_2) =\bigvee  f(D_1 \cdot_{\IDA{(S)}} D_2)
		\leqs \bigvee  f(\kk)
		\leqs \bigvee f(D_1 \cdot D_2)
		= g(D_1) g(D_2).
		\]
		We conclude $g(D_1 \cdot_{\IDA{(S)}} D_2) = g(D_1) g(D_2)$.
		
		Assume that $ D_i, \ {i \in I}$ is a family of $\A$-ideals of $S$. Then $\bigvee\limits_{i \in I}^{\IDA(S)} D_i$ exists.
		Since
		\[
		\bigvee_{i\in I} g(D_i) = \bigvee_{i\in I} \bigvee f(D_i)=\bigvee  f\left(\bigcup\limits_{i \in I} D_i\right),
		\]
		and
		\[
		g\left(\bigvee_{i \in I}^{\IDA(S)}  D_i\right) = \bigvee  f\left(\bigvee_{i \in I}^{\IDA(S)}  D_i\right),
		\]
		the inequality $\bigvee\limits_{i \in I} g(D_i) \leqs g\left(\bigvee\limits_{i \in I}^{\IDA(S)}  D_i\right)$ is clear. Consider the set
		\[
		\widetilde{\kk}=\left\{ x \in S \ \Bigg | \ f(x) \leqs \bigvee f\left(\bigcup\limits_{i \in I} D_i\right)\right\}.
		\]
		A similar proof as we did for $\kk$ above shows that
		$\widetilde{\kk}$ is an $\A$-ideal as well and contains
		the set $\bigcup\limits_{i \in I} D_i$ and therefore
		also $\bigvee\limits_{i \in I}^{\IDA(S)} D_i$. Hence we get
		that $g\left(\bigvee\limits_{i \in I}^{\IDA(S)} D_i\right) %
		\leqs g\left(\widetilde{\kk}\right)\leqs \bigvee f\left(\bigcup\limits_{i \in I} D_i\right)= %
		\bigvee\limits_{i \in I} g(D_i)$. As a result, we obtain that $g$ is a $\QUAN$-morphism. For any $s \in S$,
		\[
		g(t(s)) = g(s \da) = \bigvee f(s \da) = f(s),
		\]
		gives us that $g\circ t=f$.
		
		It remains to show that $g$ is unique. Suppose that
		$h: \IDA{(S)} \to Q$ is a $\QUAN$-morphism such that $h\circ t = f$.
		Then $h(s \da)=f(s)$ for every $s \in S$.
		
		We observe that
		$
		D = \bigvee\limits_{d \in D} (d \downarrow)
		$
		in $\IDA{(S)}$ for any $\A$-ideal $D$ of $(S,\A)$.  Hence
		\begin{align*}
			g(D) & = \bigvee  f(D) = \bigvee\limits_{d \in D} h(d \downarrow) = h \left(\bigvee\limits_{d \in D} (d \downarrow)\right) = h(D).
		\end{align*}
		As a consequence, we eventually achieve that $t$ is a $\QUAN$-reflection
		for $(S,\A)$.
	\end{proof}
	
	Next we show that the process of taking $\A$-ideals is functorial into the category
	$\QUAN$ of quantales.

	\begin{proposition}\label{theo-reflector-D(S)}
		There is a  reflector functor $Id:\MQUAN\to\QUAN$ constructed according to the assignment
		\[
		\xymatrix{
			(S,\A) \ar[rr]^{t_S} \ar[d]_{f}
			&  & \IDA(S) \ar[d]^{Id(f)}\\
			(T,\B) \ar[rr]^{t_T}&  & \IDB(T)}
		\]
		where
		\[
		Id(f)(D) = \jbj(f(D)\da)
		\]
		for every $D \in \IDA(S)$.
	\end{proposition}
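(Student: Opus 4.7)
The plan is to invoke Theorem~\ref{theo-reflection-D} objectwise and let the uniqueness clause force both the explicit formula and functoriality. Given a marked quantale morphism $f\colon(S,\A)\to(T,\B)$, the composite $t_T\circ f$ is a $\MQUAN$-morphism from $(S,\A)$ into the quantale $\IDB(T)$, viewed as a marked quantale via Note~\ref{note1}(3). Theorem~\ref{theo-reflection-D} then supplies a unique $\QUAN$-morphism $\widehat{f}\colon\IDA(S)\to\IDB(T)$ with $\widehat{f}\circ t_S=t_T\circ f$, and I would \emph{define} $Id(f):=\widehat{f}$; commutativity of the displayed square is then built in.

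To match the explicit formula $Id(f)(D)=\jbj(f(D){\da})$, I would recall from the proof of Theorem~\ref{theo-reflection-D} that the extension of a $\MQUAN$-morphism $h\colon(S,\A)\to Q$ into a quantale $Q$ is given by $\widehat{h}(D)=\bigvee^{Q}h(D)$. Specialising $h=t_T\circ f$ with $Q=\IDB(T)$, and using that joins in the quantic quotient $\IDB(T)=\p(T)_{\jbj}$ are computed by $\bigvee^{\IDB(T)}\{I_\lambda\}=\jbj(\bigcup_\lambda I_\lambda)$, one obtains
\[
\widehat{f}(D)=\bigvee\nolimits^{\IDB(T)}\{f(d){\da}\mid d\in D\}=\jbj\bigl(\textstyle\bigcup_{d\in D}f(d){\da}\bigr)=\jbj(f(D){\da}),
\]
which is exactly the stated assignment, so $Id(f)$ is automatically a quantale morphism.

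Functoriality then falls out of the uniqueness clause of Theorem~\ref{theo-reflection-D}. The identity on $\IDA(S)$ trivially satisfies $\mathrm{id}\circ t_S=t_S\circ\mathrm{id}_{(S,\A)}$, so $Id(\mathrm{id}_{(S,\A)})=\mathrm{id}_{\IDA(S)}$. For composable arrows $f\colon(S,\A)\to(T,\B)$ and $g\colon(T,\B)\to(U,\C)$, both $Id(g\circ f)$ and $Id(g)\circ Id(f)$ are quantale morphisms $\IDA(S)\to Id_{\C}(U)$ that compose with $t_S$ to give $t_U\circ g\circ f$ (for the second one, $(Id(g)\circ Id(f))\circ t_S=Id(g)\circ t_T\circ f=t_U\circ g\circ f$); hence uniqueness forces $Id(g\circ f)=Id(g)\circ Id(f)$. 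Together with the naturality of $t$ built into the construction, this exhibits $Id$ as left adjoint to the inclusion $\QUAN\hookrightarrow\MQUAN$, i.e., the desired reflector.

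The only nonroutine piece I foresee is the identification $\bigvee\nolimits^{\IDB(T)}\{f(d){\da}\mid d\in D\}=\jbj(f(D){\da})$, which merely unfolds how joins are computed in the quantic quotient $\p(T)_{\jbj}$; every other step is forced by the universal property already established in Theorem~\ref{theo-reflection-D}, so no serious obstacle remains.
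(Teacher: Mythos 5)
Your proposal is correct and follows essentially the same route as the paper: both obtain $Id(f)$ as the unique quantale morphism supplied by the universal property of Theorem \ref{theo-reflection-D} applied to $t_T\circ f$, and both identify $Id(f)(D)$ with $\bigvee^{\IDB(T)} t_T(f(D))=\jbj(f(D){\da})$ (the paper via join-density of $t_S(S)$, you via the explicit formula $g(D)=\bigvee f(D)$ from the proof of that theorem — the same computation). The only cosmetic difference is that the paper delegates functoriality to Proposition 4.22 of \cite{Joy of cats}, whereas you verify it directly from uniqueness.
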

	\begin{proof}
		By Theorem \ref{theo-reflection-D}, we know that $\QUAN$ is a reflective subcategory of $\MQUAN$, and  the morphism $t:(S,\A) \to \IDA{(S)}$  is a $\QUAN$-reflection for each marked quantale $(S,\A)$. Thus the inclusion functor $E\colon\QUAN \to \MQUAN$ defined as $E(Q)=(Q,{\mathcal P}(Q))$ and $E(f)=f$ has a left adjoint $Id: \MQUAN \to \QUAN$ which, by proposition 4.22 in \cite{Joy of cats}, can be described explicitly as follows. It takes an
		object $(S,\A)$ of $\MQUAN$ to an object $\IDA{(S)}$ of $\QUAN$ and a morphism $f: (S,\A) \to 	(T,\B)$ in $\MQUAN$ to the unique morphism $Id(f): \IDA(S) \to \IDB(T)$
		in $\QUAN$ such that the square
		\[
		\xymatrix{
			S \ar[rr]^{t_S} \ar[d]_{f}
			&  & \IDA(S) \ar[d]^{Id(f)}\\
			T \ar[rr]^{t_T}&  & \IDB(T)}
		\]
 commutes. By Theorem \ref{theo-D-5}, $t_S(S)$ is join-dense in $\IDA(S)$.
		We conclude
		\begin{align*}
			Id(f)(D) &= Id(f) \left( \bigvee t_S(D) \right) = %
			\bigvee  Id(f) \left( t_S(D) \right) = \bigvee  t_T \left( f(D) \right)\\
			&= \bigvee_{d \in D} f(d)\da = \jbj\left(\bigcup_{d \in D} f(d)\da\right) = \jbj(f(D)\da),
		\end{align*}
		for each $D\in\IDA(S)$.
	\end{proof}

\begin{remark}\rm In what follows we will describe units and counits of the adjunction betweeen $Id$ and $E$. For each object 	$(S,\A)$  in $\MQUAN$ there is a morphism
	$\eta_{(S,\A)}\colon (S,\A) \to (\IDA(S),{\mathcal P}(\IDA(S)))$ (unit) defined as
	$\eta_{(S,\A)}(x)=x\da=t(x)$. Since $t$ is a reflection for $(S,\A)$ then
	for every quantale $Q$ and every $\MQUAN$-morphism $f\colon (S,\A) \to E(Q)$
	there is
	a unique morphism of quantales $g\colon \IDA(S)\to Q$ such that
	 the following diagram commutes
		\[
	\bfig\scalefactor{1.4}%
	\qtriangle/->`->`-->/[(S,\A)`E(\IDA{(S)})`E(Q);\eta_{(S,\A)}`f`E(g)]\efig
	\]
	Similarly, for each quantale $Q$ there is a quantale morphism
	$\varepsilon_Q\colon Id_{{\mathcal P}(Q)}(Q)\to Q$ (counit) defined as
	$\varepsilon_Q(a\da)=a$ since $Id_{{\mathcal P}(Q)}(Q)=\{a\da \mid a\in Q\}$.
	Moreover, for each object 	$(S,\A)$  in $\MQUAN$ and every quantale morphism
	$h\colon \IDA(S) \to Q$ there exists a unique $\MQUAN$-morphism
	$k\colon (S,\A)\to E(Q)$ such that
	the following diagram commutes
		\[
	\bfig\scalefactor{1.4}%
	\qtriangle/-->`->`->/[\IDA(S)`Id_{{\mathcal P}(Q)}(Q)%
	`Q;Id(k)`h`\varepsilon_Q]\efig
	\]
	where $k(s)=h(s\da)$ for all $s\in S$. Recall that $Q$ and $Id_{{\mathcal P}(Q)}(Q)$
	are isomorphic as quantales.
\end{remark}

\begin{remark}\rm  Recall that all our results remain valid if multiplication on
	$(S,\A)$ is commutative or it coincides with the meet operation. The respective quantale $\IDA(S)$ is then commutative or a frame \cite[Proposition 3.4]{frith2}, respectively, the prescription $Id$ is functorial  \cite[Proposition 3.5]{frith2}
	and the mapping $t$ yields a reflection to commutative quantales
	or frames \cite[Proposition 3.8]{frith2}, respectively.
	
\end{remark}

	
\section{A reflector for $\QUAN$ induced by $\CPS$}

In this section  we discuss another reflection to quantales from
the category of posemigroups where morphisms are closure and multiplication preserving mappings.
Some parts of our results were obtained in an equivalent setting by Xia, Zhao and Han in \cite[Lemma 3.11, Proposition 3.13 and Theorem 3.14]{XZH}. For the reader's convenience
(the paper is in Chinese) we include all proofs.
	
Recall that we have mentioned in Section \ref{section-2} that for a posemigroup $S$, the principal closed nucleus ${}^{-}$ on $\PP(S)$ contributes to give a quantic quotient $\QQ(S)$ and  induces a natural order-embedding $\tau:S\to\QQ(S)$ by $\tau(s)=s\da$ for every $s\in S$.

We say that a morphism $f:S\to T$ between posemigroups is {\bf closure preserving} if
\[
f\left(\ov{M\da}\right)\subseteq\ov{f(M)\da}
\]
for every subset $M$ of $S$ (see Example \ref{example-1}). Let us write $\CPS$ for the subcategory of $\POSG$, whose objects are posemigroups and morphisms are closure and multiplication preserving mappings. Thanks to Proposition \ref{new-prop},  we obtain an analogous result for the quantale $\QQ(S)$ similar to Theorem \ref{theo-D-5}.
	
\begin{theorem}\label{theo-tau}
Let $S$ be a posemigroup. Then the mapping $\tau: S \to \QQ{(S)}$ defined by $\tau(s) = s \da$, $\forall s \in S$, is a $\CPS$-morphism, which is also an order-embedding and preserves all existing meets in $S$. In addition, $\tau(S)$ is join-dense in the lattice $\QQ(S)$.
\end{theorem}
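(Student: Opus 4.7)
The plan is to mirror the structure of the proof of Theorem \ref{theo-D-5}: reduce three of the four assertions directly to Proposition \ref{new-prop}, and then verify by hand that $\tau$ is a $\CPS$-morphism. Example \ref{example-n3} records that the operator ${}^{-}$ on $\PP(S)$ is a principal closed quantic nucleus, so Proposition \ref{new-prop} applied to $j={}^{-}$ immediately yields that $\tau$ is an order-embedding, preserves existing meets, and has join-dense image in $\QQ(S)$. This leaves only the $\CPS$-morphism claim to be verified.

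For multiplication preservation, a short direct calculation in $\PP(S)$ gives $a\da\cdot b\da=(ab)\da$ for all $a,b\in S$: one inclusion follows from $x\leqs a$ and $y\leqs b$ forcing $xy\leqs ab$, the other from $ab\in a\da\cdot b\da$. Since principal lower sets are already closed, I obtain $\tau(a)\cdot_{\QQ(S)}\tau(b)=\ov{a\da\cdot b\da}=(ab)\da=\tau(ab)$.

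The substantive step is closure preservation: for every $M\subseteq S$ one needs $\tau(\ov{M\da})\subseteq \ov{\tau(M)\da}^{\QQ(S)}$, where the outer closure is computed in the posemigroup $\QQ(S)$. My key preliminary observation will be that inside any quantale $Q$ the closure of a lower set $D$ collapses to the principal lower set generated by its join, namely $\ov{D}^{Q}=(\bigvee^{Q}D)\da^{Q}$. One direction comes from substituting $b=c=1$ in the closure condition of Example \ref{example-n3} and invoking distributivity in $Q$; the other uses the monotonicity inequality $bxc\leqs b(\bigvee D)c$ whenever $x\leqs\bigvee D$. Applied with $Q=\QQ(S)$ and $N=\tau(M)\da^{\QQ(S)}$, and using that joins in the quantic quotient $\QQ(S)$ are closures of unions so that $\bigvee^{\QQ(S)}\tau(M)=\ov{M\da}$, the right-hand side simplifies to $(\ov{M\da})\da^{\QQ(S)}$. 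The target inclusion then becomes transparent: for each $x\in\ov{M\da}$, the principal lower set $\tau(x)=x\da$ is contained in $\ov{M\da}$ because closed subsets are lower sets, and therefore $\tau(x)\in(\ov{M\da})\da^{\QQ(S)}$.

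I expect the main obstacle to be notational rather than conceptual: one must keep cleanly separated the two ambient structures in play, namely lower sets of $S$ versus elements of $\QQ(S)$, together with the closure and join operations computed in each. Once the internal identification $\ov{D}^{Q}=(\bigvee D)\da$ inside a quantale is isolated, the remaining verification is pure bookkeeping.
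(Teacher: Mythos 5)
Your proposal is correct and follows the paper's own route: the paper proves this theorem simply by invoking Proposition \ref{new-prop} together with the fact (Example \ref{example-n3}) that ${}^{-}$ is a principal closed quantic nucleus, exactly as you do for the order-embedding, meet-preservation and join-density claims. The $\CPS$-morphism part, which the paper leaves implicit, you verify correctly; note that your key observation $\ov{D}=\left(\bigvee D\right)\da$ inside a quantale is precisely Corollary \ref{lemm-n2}(2), stated immediately after the theorem, so your argument is fully consistent with the machinery the paper already has in place.
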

	
\begin{proposition}\label{prop-Q-D}
Let $(S,\A)$ be a marked quantale. Then  $\QQ(S)\subseteq\IDA(S)$.
\end{proposition}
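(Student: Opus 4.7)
The plan is to take an arbitrary element $D\in\QQ(S)$, i.e., a lower subset of $S$ with $\ov{D}=D$, and show that $D$ is an $\A$-ideal. Since $D$ is already a lower subset, the only thing to verify is that for every admissible subset $M\subseteq D$, the join $\bigvee M$ (which exists because $(S,\A)$ is a marked quantale) lies in $D$.

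To show $\bigvee M\in D=\ov{D}$, I would unfold the definition of closure from equation (\ref{eqPOSG6-1}) and fix arbitrary $a\in S$, $b,c\in S^1$ satisfying $bDc\subseteq a\da$; the goal becomes $b(\bigvee M)c\leqs a$. Since $M\subseteq D$, we certainly have $bMc\subseteq bDc\subseteq a\da$. Now the two defining properties of a marked quantale kick in: by axiom (A2) of Definition \ref{defsel}, the set $bMc=\{bxc\mid x\in M\}$ is itself admissible, so the marked quantale distributivity law (\ref{eq-1-3}) applies and yields $b(\bigvee M)c=\bigvee(bMc)$. Because every element of $bMc$ lies below $a$, the join $\bigvee(bMc)$ is likewise $\leqs a$, giving $b(\bigvee M)c\leqs a$ as required.

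Hence $\bigvee M\in\ov{D}=D$, so $D\in\IDA(S)$, and consequently $\QQ(S)\subseteq\IDA(S)$. The argument is essentially a two-line verification once (A2) and the distributivity condition (\ref{eq-1-3}) are invoked; I do not anticipate any real obstacle, as the main subtlety is simply recognizing that admissibility of $M$ transfers to $bMc$ (via (A2)) so that the marked quantale distributivity can convert the join inside $b(\cdot)c$ into a join of elements already known to sit below $a$.
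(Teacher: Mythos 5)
Your proof is correct and follows essentially the same route as the paper: reduce to showing $\bigvee M\in\ov{D}=D$ for admissible $M\subseteq D$, and use the marked quantale distributivity to get $b(\bigvee M)c=\bigvee(bMc)\leqs a$. One small quibble: the equality $b(\bigvee M)c=\bigvee(bMc)$ follows directly from $M$ being admissible (that is exactly what (\ref{eq-1-3}) asserts for any $b,c\in S^1$), so the detour through (A2) to establish admissibility of $bMc$ is unnecessary and is not actually what licenses that step.
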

\begin{proof}
Take $D \in \QQ(S)$. Since $D$ is a lower set we only have	to show that, for any admissible subset $M$  of $D$, we have $\bigvee M \in D$. Assume that $b(M\da)c\subseteq a\da$ for $b, c \in S^1, \ a \in S$. Then $bMc\subseteq a\da$ and hence $b\left(\bigvee M\right)c =\bigvee(bMc)\leqs a$ because $M$ is admissible. We conclude $\bigvee M\in \ov{M\da} \subseteq \ov{D}=D$.
\end{proof}

\begin{remark}\em The inclusion $\IDA(S)\subseteq\QQ(S)$ is not generally true, see Example \ref{example-2}.
\end{remark}

As the nucleus ${}^{-}$ is principal closed, compared with Corollary \ref{lemm-n2-D}, we conclude the following results by Proposition \ref{prop-Q-D}.
	
\begin{corollary}\label{lemm-n2}
Let $(S,\A)$ be  {a marked quantale}, $D\subseteq S$ with $\bigvee D$ existing.  Then the following statements hold.
\begin{enumerate}
\item $\ov{D\da}\subseteq(\bigvee D)\da$.
\item If $S$ is a quantale  or $D$ is {an $\A$}-admissible subset then	$\ov{D\da} =\left (\bigvee D\right) \da\in \IDA(S)$	and $\bigvee \ov{D\da} = \bigvee D.$
\item If $S$ is a quantale, then $S \cong \QQ{(S)}$  as quantales. Moreover, $\QQ{(S)}={Id}_{{\mathcal P}(S)}(S)$.
\end{enumerate}
\end{corollary}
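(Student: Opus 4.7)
For part (1), my plan is to unpack the definition of the closure. Given $x\in\ov{D\da}$, I would specialize the defining condition with $b=c=1\in S^{1}$ and $a=\bigvee D$. Since every element of $D\da$ lies below some element of $D$, and every element of $D$ lies below $\bigvee D$, we have $1\cdot(D\da)\cdot 1\subseteq(\bigvee D)\da$, and the definition of $\ov{D\da}$ then forces $x=1\cdot x\cdot 1\leqs\bigvee D$.

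For part (2), it suffices to establish the reverse inclusion $(\bigvee D)\da\subseteq\ov{D\da}$. Fix $x\leqs\bigvee D$ and arbitrary $a\in S$, $b,c\in S^{1}$ with $b(D\da)c\subseteq a\da$; I need $bxc\leqs a$. In both sub-cases (either $S$ a quantale, or $D$ admissible in the marked quantale $(S,\A)$) the key trick is to push the multiplication through the join: when $S$ is a quantale this is immediate from the quantale axioms, while when $D$ is admissible, axiom (A2) tells us that $bDc$ is also admissible, so the marked quantale axiom (\ref{eq-1-3}) gives $b(\bigvee D)c=\bigvee(bDc)\leqs a$ (the last inequality because $bDc\subseteq b(D\da)c\subseteq a\da$). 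In either case monotonicity yields $bxc\leqs b(\bigvee D)c\leqs a$. The membership $(\bigvee D)\da\in\IDA(S)$ is then immediate: any admissible subset of $(\bigvee D)\da$ has each element $\leqs\bigvee D$, so its join also lies in $(\bigvee D)\da$. The identity $\bigvee\ov{D\da}=\bigvee D$ then reduces to $\bigvee(\bigvee D)\da=\bigvee D$, which is trivial.

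For part (3), when $S$ is a quantale, part (2) applies to every $D\in\PP(S)$ and shows that the closure nucleus ${}^{-}$ on $\PP(S)$ satisfies $\ov{D\da}=(\bigvee D)\da$ for every lower set $D$. This is precisely the hypothesis of Proposition \ref{prop-represent.}, so $S\cong\PP(S)_{{}^{-}}=\QQ(S)$ as quantales. For the additional identification $\QQ(S)=Id_{\mathcal{P}(S)}(S)$, the inclusion $\QQ(S)\subseteq Id_{\mathcal{P}(S)}(S)$ is Proposition \ref{prop-Q-D} (applied with $\A=\mathcal{P}(S)$). For the converse, if $D\in Id_{\mathcal{P}(S)}(S)$ then $D$ itself is admissible, hence $\bigvee D\in D$, and combined with the fact that $D$ is a lower set this gives $D=(\bigvee D)\da=\ov{D\da}=\ov{D}$, so $D\in\QQ(S)$.

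The only place where one must be careful is the case distinction in part (2): the main obstacle is keeping track of why we need either the full quantale hypothesis or the admissibility of $D$. What makes the argument work is precisely the distributivity of multiplication over the relevant joins, which is available only in those two settings; this is also the reason the inclusion $\IDA(S)\subseteq\QQ(S)$ fails in general, as anticipated by the remark following Proposition \ref{prop-Q-D}.
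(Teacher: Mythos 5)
Your proof is correct and follows essentially the same route as the paper, which obtains the corollary by combining Proposition \ref{prop-topol} (principal closedness of ${}^{-}$), the computation in Proposition \ref{prop-Q-D} showing $\bigvee D\in\ov{D\da}$ via distributivity over admissible (or arbitrary, in the quantale case) joins, and Proposition \ref{prop-represent.} for the isomorphism $S\cong\QQ(S)$; you simply inline these arguments rather than citing them. The only minor remark is that in part (2) the appeal to (A2) is superfluous: axiom (\ref{eq-1-3}) applied to the admissible set $D$ itself already yields $b\left(\bigvee D\right)c=\bigvee(bDc)$, without needing $bDc$ to be admissible.
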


	\begin{proposition}
		Let $f:S \to T$ be a morphism of posemigroups. Then the following statements are equivalent:
		\begin{enumerate}
			\item $f$ is closure preserving.
			\item $\ov{f\left(\ov{M}\right)\da} = \ov{f(M)\da}$ for every $M \in\PP(S)$.
			\item $\ov{f^{-1}(M)\da}=f^{-1}(M)$ for every $M \in \QQ(T)$.
		\end{enumerate}
	\end{proposition}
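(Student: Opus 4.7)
The plan is to establish the cycle $(1) \Rightarrow (2) \Rightarrow (3) \Rightarrow (1)$, using in each step only monotonicity of $f$, idempotency of $\overline{(\cdot)}$, the fact that the closure of a lower set is again a lower set, and the fact that a closed set equals its own closure.

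For $(1) \Rightarrow (2)$, let $M \in \PP(S)$, so $M = M\da$. Then by $(1)$ we have $f(\overline{M}) = f(\overline{M\da}) \subseteq \overline{f(M)\da}$. Since $\overline{f(M)\da}$ is itself a (closed) lower set, it absorbs downward closure, so $f(\overline{M})\da \subseteq \overline{f(M)\da}$, and applying the monotone, idempotent closure operator yields $\overline{f(\overline{M})\da} \subseteq \overline{f(M)\da}$. The reverse inclusion is immediate from $M \subseteq \overline{M}$ via monotonicity of $f$, downward closure, and closure.

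For $(2) \Rightarrow (3)$, fix $M \in \QQ(T)$ and set $N = f^{-1}(M)$. Since $f$ is monotone and $M$ is a lower set, $N$ is a lower set, giving the trivial inclusion $N \subseteq \overline{N\da} = \overline{N}$. For the nontrivial direction, apply $(2)$ to $N$ to obtain $\overline{f(\overline{N})\da} = \overline{f(N)\da}$. Since $f(N) \subseteq M$ and $M = \overline{M}$ is closed, $\overline{f(N)\da} \subseteq \overline{M\da} = \overline{M} = M$, hence $f(\overline{N}) \subseteq \overline{f(\overline{N})\da} \subseteq M$, which gives $\overline{N} \subseteq f^{-1}(M) = N$, as required.

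For $(3) \Rightarrow (1)$, let $M \subseteq S$ be arbitrary and put $N = \overline{f(M)\da} \in \QQ(T)$. By $(3)$, $\overline{f^{-1}(N)\da} = f^{-1}(N)$. Since $f(M) \subseteq N$, we have $M \subseteq f^{-1}(N)$, hence $M\da \subseteq f^{-1}(N)\da$, and applying the closure gives $\overline{M\da} \subseteq \overline{f^{-1}(N)\da} = f^{-1}(N)$. Therefore $f(\overline{M\da}) \subseteq N = \overline{f(M)\da}$, which is exactly condition $(1)$.

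There is no single hard step here; the only thing that requires slight care is keeping track of when a set is automatically a lower set (e.g. $f^{-1}(M)$ in $(2) \Rightarrow (3)$, which uses monotonicity of $f$) and when one must explicitly apply $(\cdot)\da$ before taking closure. Once these bookkeeping points are handled, each implication reduces to monotonicity of $\overline{(\cdot)}$ together with the defining property of closed sets as fixpoints.
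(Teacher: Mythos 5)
Your proof is correct and follows essentially the same route as the paper: the same cycle $(1)\Rightarrow(2)\Rightarrow(3)\Rightarrow(1)$, with each implication resting on the same inclusions (the only cosmetic difference being that in $(2)\Rightarrow(3)$ you apply $(2)$ directly to the lower set $f^{-1}(M)$, whereas the paper applies it to $f^{-1}(M)\da$). No gaps.
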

	
	\begin{proof}
		(1)$\Rightarrow$(2) Clearly, $f(M)\da \subseteq f\left(\ov{M}\right)\da$.
		By assumption, $f\left(\ov{M}\right)\subseteq\ov{f(M)\da}$. We conclude
		\[
		\ov{f(M)\da} \subseteq \ov{f\left(\ov{M}\right)\da} %
		\subseteq \ov{\ov{f(M)\da}} = \ov{f(M)\da},
		\]
		that is, $\ov{f(\ov{M})\da} = \ov{f(M)\da}$ as desired.
		
		(2)$\Rightarrow$(3) Suppose that $M \in \QQ(T)$ and $x \in \ov{f^{-1}(M)\da}$. Then
		\[
		f(x) \in f\left(\ov{f^{-1}(M)\da}\right) \subseteq \ov{f\left(\ov{f^{-1}(M)\da}\right)\da} = \ov{f\left(f^{-1}(M)\right)\da} \subseteq \ov{M} = M.
		\]
		So $x \in f^{-1}(M)$, and we achieve that
		$\ov{f^{-1}(M)\da} \subseteq f^{-1}(M) \subseteq \ov{f^{-1}(M)\da}$.
		
		(3)$\Rightarrow$(1)  It is obvious that for each subset $M \subseteq S$,
		$\ov{f(M)\da} \in \QQ(T)$. Since $M \subseteq f^{-1}\left(\ov{f(M)\da}\right)$, we conclude that
		\[
		\ov{M\da} \subseteq \ov{f^{-1}\left(\ov{f(M)\da}\right)\da}%
		=f^{-1}\left(\ov{f(M)\da}\right).
		\]
		Therefore, $f\left(\ov{M\da}\right) \subseteq \ov{f(M)\da}$. The proof is complete.
	\end{proof}
	
	 The following propositions yield that $\QUAN$ is a full subcategory of $\CPS$ and $\CPS$ is a subcategory of $\DQUAN$.

\begin{proposition} \cite[Proposition 3.13]{XZH}\label{prop-new-1}
Let $S,T$ be quantales and $f:S\to T$ be a posemigroup morphism. Then $f$ is a quantale morphism if and only if  it is closure preserving.
\end{proposition}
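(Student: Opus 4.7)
My plan is to reduce the closure-preservation condition to a statement about principal down-sets generated by joins, using Corollary \ref{lemm-n2}(2), and then deduce join-preservation from this.

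First I would record the key reduction: since $S$ and $T$ are quantales, Corollary \ref{lemm-n2}(2) (applied with $\A$ being the full power set) gives $\overline{M\da}=(\bigvee M)\da$ for every $M\subseteq S$, and analogously $\overline{f(M)\da}=(\bigvee f(M))\da$ in $T$. Thus the closure-preservation condition $f(\overline{M\da})\subseteq \overline{f(M)\da}$ is equivalent, in the quantale setting, to the inclusion
\[
f\bigl((\bv M)\da\bigr)\subseteq \bigl(\bv f(M)\bigr)\da.
\]

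For the forward implication, I would assume $f$ is a quantale morphism. Given $M\subseteq S$ and $x\leqs \bv M$, monotonicity and join-preservation yield $f(x)\leqs f(\bv M)=\bv f(M)$, so $f(x)\in (\bv f(M))\da$. By the reduction above, $f$ is closure preserving.

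For the converse, assume $f$ is a posemigroup morphism that is closure preserving. Let $M\subseteq S$; monotonicity of $f$ already gives $\bv f(M)\leqs f(\bv M)$. For the reverse inequality, observe that $\bv M\in (\bv M)\da =\overline{M\da}$, so
\[
f(\bv M)\in f\bigl(\overline{M\da}\bigr)\subseteq \overline{f(M)\da}=\bigl(\bv f(M)\bigr)\da,
\]
whence $f(\bv M)\leqs \bv f(M)$. Hence $f$ preserves all joins, and together with preservation of multiplication (which holds by hypothesis), $f$ is a quantale morphism.

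The only nontrivial step is the reduction via Corollary \ref{lemm-n2}(2), but this is already established; everything else is a direct manipulation of down-sets and suprema, so I expect no substantive obstacle.
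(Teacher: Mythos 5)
Your proof is correct and follows essentially the same route as the paper's: both directions hinge on Corollary \ref{lemm-n2} identifying $\ov{D\da}$ with $\left(\bigvee D\right)\da$ in a quantale, with monotonicity supplying $\bigvee f(M)\leqs f\left(\bigvee M\right)$ and closure preservation supplying the reverse inequality. Your packaging of that reverse inequality as $f\left(\bigvee M\right)\in\ov{f(M)\da}=\left(\bigvee f(M)\right)\da$ is just a slightly more direct rendering of the paper's argument via upper bounds $t$, so the content is identical.
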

	
	\begin{proof} Let $D \subseteq S$.
		Suppose that $f$ is closure preserving. Apparently,  $\bigvee f(D)\leqs f(\bigvee D)$.
		Observe that $\ov{f(D)\da} = \left(\bigvee f(D)\right)\da$.
		From Corollary \ref{lemm-n2} we conclude that, for any $t\in S$,
		\[
		\bigvee f(D) \leqs t\Longrightarrow f\left(\ov{D\da}\right)%
		\subseteq\ov{f(D)\da} = \left(\bigvee f(D)\right)\Big\downarrow \subseteq t\da\Longrightarrow f\left(\left(\bigvee D\right)\Big\downarrow\right) \subseteq t\da.
		\]
		Therefore, $f(\bigvee D)\leqs\bigvee f(D)$ as needed.
		
		Conversely, if $f$ is a quantale morphism then
		\[
		f\left(\ov{D\da}\right) \subseteq f\left(\left(\bigvee D\right) \Big \downarrow\right) %
		 \subseteq f\left(\bigvee D \right) \Big \downarrow = \left(\bigvee f(D) \right) \Big \downarrow = \ov{f(D)\da}.
		\]
	\end{proof}

	\begin{proposition}\label{prop-new-2}
		Let $S,T$ be marked quantales. If $f:S\to T$ is closure preserving
		then it preserves all admissible joins.
	\end{proposition}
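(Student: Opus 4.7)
The plan is to apply the closure‑preserving hypothesis to the set $M$ itself and then use the fact that every principal lower set $t\da$ in $T$ is closed to recover the universal property of the supremum. Concretely, let $M$ be an $\A$-admissible subset of $S$, so $\bigvee M$ exists in $S$. Since $f$ is monotone, $f(\bigvee M)$ is automatically an upper bound of $f(M)$; the whole task is to show it is the \emph{least} upper bound, which simultaneously furnishes existence of $\bigvee f(M)$ in $T$ and the equality $f(\bigvee M)=\bigvee f(M)$.

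First, I would invoke Corollary \ref{lemm-n2}(2): because $M$ is admissible in the marked quantale $(S,\A)$, we have $\ov{M\da}=(\bigvee M)\da$. Applying the closure‑preservation inequality $f(\ov{M\da})\subseteq\ov{f(M)\da}$ with this identification gives
\[
f\left(\left(\bigvee M\right)\da\right)\subseteq \ov{f(M)\da},
\]
and in particular $f(\bigvee M)\in\ov{f(M)\da}$.

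Next, let $t\in T$ be any upper bound of $f(M)$, so $f(M)\subseteq t\da$, hence $f(M)\da\subseteq t\da$. The lower set $t\da$ is closed (it is the image of $t$ under the embedding $\tau$ of Theorem \ref{theo-tau}), so
\[
\ov{f(M)\da}\subseteq \ov{t\da}=t\da.
\]
Combining this with $f(\bigvee M)\in\ov{f(M)\da}$ yields $f(\bigvee M)\leqs t$. Thus $f(\bigvee M)$ is the least upper bound of $f(M)$ in $T$, which is exactly the statement that $\bigvee f(M)$ exists and equals $f(\bigvee M)$.

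The only real subtlety is making sure that existence of $\bigvee f(M)$ in $T$ is not silently assumed; but the argument just given produces a concrete candidate ($f(\bigvee M)$ itself) and verifies the universal property, so existence comes for free. The key input from the marked‑quantale hypothesis is Corollary \ref{lemm-n2}(2), which makes the closure $\ov{M\da}$ coincide with the principal lower set generated by $\bigvee M$; without admissibility of $M$ this identification fails and the argument breaks.
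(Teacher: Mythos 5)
Your proof is correct and follows essentially the same route as the paper's: both identify $\ov{M\da}=(\bigvee M)\da$ via Corollary \ref{lemm-n2}(2), push this through the closure-preservation inclusion to get $f(\bigvee M)\in\ov{f(M)\da}$, and then use that $\ov{f(M)\da}\subseteq t\da$ for any upper bound $t$ of $f(M)$. You merely make explicit two points the paper leaves tacit, namely that $t\da$ is closed (principal closedness of ${}^{-}$) and that $f(\bigvee M)$ is an upper bound of $f(M)$ by monotonicity.
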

	\begin{proof}
		Let $D$ be an admissible subset in $S$. Then $\bigvee D$ exists, and
		$(\bigvee D)\da = \ov{D\da}$ by Corollary $\ref{lemm-n2}$.
		Since  $f$ is  closure preserving we conclude
		$f \left(\bigvee D \right) \in f(\ov{D\da}) \subseteq \ov{f(D)\da}$.
		Let $t\in T$ such that $f(D)\subseteq t\da$. Then
		$f(D)\da\subseteq t\da$ and
		$f \left(\bigvee D \right) \in \ov{f(D)\da}$ which implies
		$f(\bigvee D) \leqs  t$.
		We conclude that
		$f(\bigvee D)$ is the least upper bound of $f(D)$, namely, $f(\bigvee D) = \bigvee f(D)$.
	\end{proof}

	\begin{corollary}\label{cor-new-2}
	Let $S,T$ be marked quantales. If $f:S\to T$ is closure and admissible preserving then $f$ is a marked quantale map.
\end{corollary}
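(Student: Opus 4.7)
The plan is to unpack the definition of a marked quantale map and verify each requirement directly from the two hypotheses, assembling pieces already established in the excerpt. By definition, a marked quantale map $f:(S,\A)\to(T,\B)$ must be (i) a posemigroup morphism, (ii) a preserver of the marking (i.e.\ $M\in\A$ implies $f(M)\in\B$), and (iii) a preserver of all admissible joins. The hypothesis gives (ii) outright (this is what ``admissible preserving'' means), and (i) is built into the definition of closure-preserving morphism: recall that $\CPS$-morphisms are by definition multiplication-preserving maps satisfying $f(\ov{M\da})\subseteq \ov{f(M)\da}$, so they are in particular posemigroup morphisms.

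For (iii) I would simply invoke Proposition \ref{prop-new-2}: since $S$ and $T$ are marked quantales and $f$ is closure preserving, that proposition gives $f(\bigvee M)=\bigvee f(M)$ for every admissible subset $M$ of $S$. No further computation is needed.

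The only mildly subtle point, and the one I would make explicit in the write-up, is the compatibility of (ii) with (iii): one needs $\bigvee f(M)$ to actually exist in $T$ before one can assert equality. This is guaranteed because $f(M)\in\B$ by (ii) and $(T,\B)$ is a marked quantale, so admissible subsets of $T$ admit joins. Once this is noted, the chain of implications (closure preserving $\Rightarrow$ posemigroup morphism; admissible preserving $\Rightarrow$ marking preserving; Proposition \ref{prop-new-2} $\Rightarrow$ admissible join preserving) closes the proof in essentially one line.

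I do not anticipate any real obstacle here; the statement is a packaging corollary of Proposition \ref{prop-new-2} together with the definitions, so the ``hard part'' is merely making sure the reader sees which hypothesis supplies which of the three clauses in the definition of a marked quantale map.
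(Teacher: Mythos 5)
Your proof is correct and matches the paper's intent exactly: the corollary is stated without a printed proof precisely because it is the assembly you describe, with Proposition \ref{prop-new-2} supplying join preservation and the two hypotheses supplying the posemigroup-morphism and marking-preservation clauses. Your remark about the existence of $\bigvee f(M)$ is fine but not strictly needed, since the proof of Proposition \ref{prop-new-2} already exhibits $f(\bigvee M)$ as the least upper bound of $f(M)$ in $T$, so existence comes for free there.
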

	
		\begin{corollary}\label{xcor-new-2}
		Let $S,T$ be ${\mathcal D}$-quantales and the map  $f:S\to T$ be closure  preserving. Then $f$ is a $\D$-quantale map.
	\end{corollary}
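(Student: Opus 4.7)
My plan is to reduce the statement to admissibility preservation via Corollary \ref{cor-new-2}. Since $f$ is closure preserving, it automatically preserves all admissible joins by Proposition \ref{prop-new-2}; hence the only further point needed in order to conclude that $f$ is a $\D$-quantale map is that $M\in\D$ in $S$ implies $f(M)\in\D$ in $T$ (I use the same symbol $\D$ for the markings on $S$ and $T$, these being defined intrinsically from multiplication and existing joins).

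To establish this, I fix $M\in\D$ in $S$ and verify the two requirements for $f(M)$ to lie in $\D$ in $T$. Existence of $\bigvee f(M)$ together with the equality $\bigvee f(M)=f(\bigvee M)$ is immediate from Proposition \ref{prop-new-2}. For the distributive identity $a'(\bigvee f(M))b'=\bigvee(a'f(M)b')$ with $a',b'\in T^1$, the $\geqs$-direction is just monotonicity. For the other direction, suppose $a'f(m)b'\leqs t$ for every $m\in M$; I must show $a'f(\bigvee M)b'\leqs t$. A brief chase (any $y\in f(M)\da$ satisfies $y\leqs f(m)$ for some $m\in M$, whence $a'yb'\leqs a'f(m)b'\leqs t$) gives $a'(f(M)\da)b'\subseteq t\da$. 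On the other hand, Corollary \ref{lemm-n2}(2) combined with $M\in\D$ yields $\ov{M\da}=(\bigvee M)\da$, so $\bigvee M\in\ov{M\da}$, and closure preservation of $f$ places $f(\bigvee M)$ inside $\ov{f(M)\da}$. The defining property of the closure operator on $T$, applied to the witness $(a',t,b')$ and the inclusion just obtained, then delivers $a'f(\bigvee M)b'\leqs t$ as required. This exhibits $f(M)$ as an element of $\D$ in $T$, and Corollary \ref{cor-new-2} finishes the argument.

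The main obstacle is that membership in $\D$ in $T$ demands the distributive identity against every $a',b'\in T^1$, not merely those arising from the image of $f$, so a direct transport of the $\D$-admissibility of $M$ from $S$ along $f$ cannot be performed element-by-element. The key feature exploited to bridge this gap is that the closure operator $\ov{(-)}$ in $T$ is built using a universal quantifier that already ranges over the whole of $T^1$; once closure preservation deposits $f(\bigvee M)$ inside $\ov{f(M)\da}$, this ambient quantifier furnishes the required least-upper-bound property against arbitrary ambient $a',b'$. This is precisely the feature of the $\D$-marking (as compared to an arbitrary marking) that makes admissibility preservation automatic, rather than requiring it as an extra hypothesis.
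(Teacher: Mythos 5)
Your proposal is correct and follows essentially the same route as the paper's own proof: the whole content is showing that $f(M)$ is $\D$-admissible in $T$, which both you and the paper obtain from $\bigvee M\in\ov{M\da}$, closure preservation giving $f(\bigvee M)\in\ov{f(M)\da}$, and the universal quantification over all of $T^1$ built into the closure on $T$ to get $a'f(\bigvee M)b'=\bigvee(a'f(M)b')$ for arbitrary $a',b'\in T^1$. The only (harmless) difference is that you package the conclusion through Proposition \ref{prop-new-2} and Corollary \ref{cor-new-2}, whereas the paper carries out the $a=b=1$ case inline within the same computation.
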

	\begin{proof} Let $D$ be a ${\mathcal D}$-admissible subset in $S$.
		It is enough to check that  $f(D)$ is ${\mathcal D}$-admissible in $T$.
		Since $\bigvee D\in  \ov{D\da}$ we obtain
		$f(\bigvee D) \in f(\ov{D\da})\subseteq \ov{f(D)\da}$.
		It is evident that $af(d)b \leqs af(\bigvee D)b$ for any $a,b \in T^1$, $d\in D$. Moreover, if $af(D)b \subseteq t\da$ for some $t\in T$ then $af(\bigvee D)b \leqs t$.
		We hence conclude that $\bigvee \left(af(D)b\right)$ exists and $\bigvee \left(af(D)b\right)=af(\bigvee D)b$. Therefore,
		$$
		a\left(\bigvee f(D)\right)b = af\left(\bigvee D\right)b = \bigvee \left(af(D)b\right).
		$$
			\end{proof}

	\begin{remark}\em Recall that the converse of Corollary \ref{xcor-new-2} is not true, i.e., a morphism of ${\mathcal D}$-quantales need not be closure preserving (see Example \ref{example-3}).
	\end{remark}

	
	\begin{lemma}\label{lemma-new}
		Let $S,T$ be marked quantales. If $f:S\to T$ is closure preserving, then
		$\bigvee f\left(\ov{D\da}\right) = \bigvee \ov{f(D)\da}$
		for any admissible subset $f(D)\subseteq T$.
	\end{lemma}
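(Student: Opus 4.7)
The plan is to squeeze $\bigvee f\left(\ov{D\da}\right)$ between two suprema that both coincide with $\bigvee f(D)$, using closure preservation in one direction and admissibility of $f(D)$ combined with Corollary~\ref{lemm-n2}(2) in the other.

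First, I would observe that by closure preservation applied to $D$ one has
$f\left(\ov{D\da}\right) \subseteq \ov{f(D)\da}$,
and since $D\subseteq\ov{D\da}$, also
$f(D) \subseteq f\left(\ov{D\da}\right) \subseteq \ov{f(D)\da}$.
This triple inclusion is what drives everything.

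Next, I would use the hypothesis that $f(D)$ is admissible in $T$. By Corollary~\ref{lemm-n2}(2) this yields $\ov{f(D)\da} = \left(\bigvee f(D)\right)\da$, so in particular $\bigvee \ov{f(D)\da} = \bigvee f(D)$ and every element of $f\left(\ov{D\da}\right)$ is bounded above by $\bigvee f(D)$. Conversely, any upper bound of $f\left(\ov{D\da}\right)$ is automatically an upper bound of the subset $f(D)$, hence dominates $\bigvee f(D)$. Therefore $\bigvee f\left(\ov{D\da}\right)$ exists and equals $\bigvee f(D)$, giving
\[
\bigvee f\left(\ov{D\da}\right) = \bigvee f(D) = \bigvee \ov{f(D)\da}
\]
as desired.

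There is no real obstacle here; the only subtle point is confirming the existence of $\bigvee f\left(\ov{D\da}\right)$, which follows cleanly from the sandwich argument above once one has invoked Corollary~\ref{lemm-n2}(2) to transform the closure $\ov{f(D)\da}$ into a principal down-set. I would keep the write-up to a few lines, making explicit the inclusions in the first step and then citing Corollary~\ref{lemm-n2}(2) for the identification of $\ov{f(D)\da}$ with $\left(\bigvee f(D)\right)\da$.
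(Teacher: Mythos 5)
Your proof is correct and follows essentially the same route as the paper's: sandwich $\bigvee f\bigl(\ov{D\da}\bigr)$ between $\bigvee f(D)$ and $\bigvee \ov{f(D)\da}$ using the inclusions $f(D)\subseteq f\bigl(\ov{D\da}\bigr)\subseteq \ov{f(D)\da}$ and the identification $\bigvee \ov{f(D)\da}=\bigvee f(D)$. The only cosmetic difference is that you invoke Corollary~\ref{lemm-n2}(2) where the paper cites part (1), and you additionally spell out the existence of $\bigvee f\bigl(\ov{D\da}\bigr)$, which is a harmless refinement.
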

	\begin{proof}
		
		Since $f\left(\ov{D\da}\right) \subseteq \ov{f(D)\da}$ by the fact that $f$
		is closure preserving,  $\bigvee f\left(\ov{D\da}\right) \leqs \bigvee \ov{f(D)\da}$ follows immediately. It is obvious that $f(D)\subseteq f\left(\ov{D\da}\right)$. Hence
		\[
		\bigvee f\left(\ov{D\da}\right)\leqs \bigvee \ov{f(D)\da}=\bigvee f(D)%
		\leqs \bigvee f\left(\ov{D\da}\right)
		\]
		by Corollary \ref{lemm-n2}, (1). 
	\end{proof}

	As a result, we obtain the following relationship of $\POSG$ and its subcategories.
	\[
	\QUAN\subseteq\MQUAN\subseteq\POSG\quad \text{and \quad}
	\QUAN\subseteq\CPS\subseteq\DQUAN\subseteq\POSG.
	\]

	We are now prepared to describe our next reflector to the category $\QUAN$. Unlike previous reflectors realized by $\A$-ideals, the reflection obtained by closed subsets has another nice property.
	Recall that  the first half of Theorem \ref{theo-1} is contained in
	\cite[Theorem 3.14]{XZH}.

	\begin{theorem}   \label{theo-1}
		Let $S$ be a posemigroup.  Then the morphism $\tau:S \to \QQ{(S)}$ defined as in Theorem \ref{theo-tau} is a full $\QUAN$-reflection for $S$. Moreover,
		if $Q$ is a quantale and $f:S\to Q$ is an order-embedding in $\CPS$, then so is
		$g: \QQ(S) \to Q$ such that $f=g\circ \tau$.
	\end{theorem}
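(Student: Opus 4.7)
The plan is to mimic the structure of the proof of Theorem \ref{theo-reflection-D}, but working with $\QQ(S)$ and exploiting the fact that $\overline{D\da}=(\bigvee D)\da$ inside a quantale by Corollary \ref{lemm-n2}. Given a quantale $Q$ and a $\CPS$-morphism $f\colon S\to Q$, I would define $g\colon\QQ(S)\to Q$ by $g(D)=\bigvee f(D)$, and then verify in turn that (i) $g$ is a quantale morphism, (ii) $g\circ\tau=f$, and (iii) $g$ is the only such morphism. The commutativity is immediate since $g(\tau(s))=g(s\da)=\bigvee f(s\da)=f(s)$, and uniqueness follows from the join-density of $\tau(S)$ in $\QQ(S)$ provided by Theorem \ref{theo-tau}: any $h$ with $h\circ\tau=f$ satisfies $h(D)=h\bigl(\bigvee_{d\in D}^{\QQ(S)} d\da\bigr)=\bigvee_{d\in D} f(d)=g(D)$. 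The word ``full'' in the statement refers to the fact that by Proposition \ref{prop-new-1}, $\QUAN$ is a full subcategory of $\CPS$, so the reflection automatically lands in a full subcategory.

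For the quantale-morphism verification, I would use that $D_1\cdot_{\QQ(S)}D_2=\overline{D_1\cdot D_2}$ and $\bigvee^{\QQ(S)}_{i\in I}D_i=\overline{\bigcup_{i\in I} D_i}$ in $\QQ(S)$. Since $Q$ is a quantale, Corollary \ref{lemm-n2}(3) gives $\overline{X\da}=(\bigvee X)\da$ for any $X\subseteq Q$, and the closure-preserving property of $f$ together with this identity forces $\bigvee f(\overline{X\da})=\bigvee f(X)$ for any $X\subseteq S$ (the analogue of Lemma \ref{lemma-new}). Applying this to $X=D_1\cdot D_2$ gives $g(D_1\cdot_{\QQ(S)}D_2)=\bigvee f(D_1\cdot D_2)=\bigvee f(D_1)\cdot\bigvee f(D_2)=g(D_1)\,g(D_2)$, using that $f$ is a posemigroup morphism and $Q$ is a quantale. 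Applying it to $X=\bigcup_{i\in I}D_i$ yields preservation of arbitrary joins.

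For the second assertion, assume $f$ is an order-embedding and suppose $g(D_1)\leqs g(D_2)$; I would show $D_1\subseteq D_2$ by picking $d_1\in D_1$ and verifying the closure condition $d_1\in\overline{D_2\da}=D_2$ from (\ref{eqPOSG6-1}). Given $a\in S$ and $b,c\in S^1$ with $bD_2c\subseteq a\da$, apply $f$ to get $f(b)f(D_2)f(c)\subseteq f(a)\da$ (interpreting $f(1)$ as the multiplicative unit of $Q$ in the distributivity computation), whence $f(b)\bigl(\bigvee f(D_2)\bigr)f(c)\leqs f(a)$ by distributivity in the quantale $Q$. Then
\[
f(bd_1c)=f(b)f(d_1)f(c)\leqs f(b)\Bigl(\bigvee f(D_1)\Bigr)f(c)\leqs f(b)\Bigl(\bigvee f(D_2)\Bigr)f(c)\leqs f(a),
\]
and the order-embedding property of $f$ yields $bd_1c\leqs a$, as required. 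The main obstacle I anticipate is simply keeping the closure/distributivity bookkeeping tidy when $b$ or $c$ is the adjoined unit in $S^1$, but this is handled uniformly by treating $1$ as the multiplicative identity of $Q$ throughout.
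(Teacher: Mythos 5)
Your proposal is correct and follows essentially the same route as the paper: the same definition $g(D)=\bigvee f(D)$, the same use of Lemma \ref{lemma-new} together with $\ov{X\da}=(\bigvee X)\da$ in the quantale $Q$ to verify that $g$ preserves joins and multiplication, uniqueness via join-density of $\tau(S)$, fullness via Proposition \ref{prop-new-1}, and the identical closure-condition computation $f(bd_1c)\leqs f(b)\,g(D_1)\,f(c)\leqs f(b)\,g(D_2)\,f(c)\leqs f(a)$ for the order-embedding claim. The only point worth noting is that the paper is equally informal about evaluating $f$ on the adjoined identity of $S^1$, so your bookkeeping remark matches what is actually done there.
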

	
	\begin{proof}
		Let $Q$ be a quantale, $f:S\to Q$  a $\CPS$-morphism. The procedure is to find a unique $\QUAN$-morphism $g: \QQ(S) \to Q$  such that the following diagram commutes.
		\[
		\bfig
		\qtriangle/->`->`-->/[S`\QQ(S)`Q;\tau`f`g]
		\efig
		\]

		Define a mapping $g: \QQ(S) \to Q$ by
		\[
		g(D):= \bigvee f(D), \
		\]
		for $ D\in \QQ(S).$ We then obtain that
		\[
		g(\tau(s)) = g(s\da) = \bigvee f(s\da) = \bigvee (f(s)\da) = f(s),
		\]
		for every $s \in S$.

		We show that $g$ preserves arbitrary joins. Given  a family of subsets $D_i$, $i \in I$, in $\QQ(S)$. Then $\bigvee\limits_{i \in I} D_{i} = \ov{\bigcup\limits_{i \in I} D_{i}}$. 
		By Lemma \ref{lemma-new}, we calculate that
		\begin{align*}
			g\left(\bigvee_{i \in I} D_{i}\right) &= g\left(\ov{\bigcup_{i \in I} D_{i}}\right)= \bigvee f\left(\ov{\bigcup_{i \in I} D_{i}}\right)
			= \bigvee \ov{f\left(\bigcup_{i \in I} D_{i}\right)\da}\\
			&= \bigvee \ov{\left(\bigcup_{i \in I} f(D_{i})\right)\da}
			= \bigvee \bigcup_{i \in I} f(D_{i})
			= \bigvee_{i \in I} \left( \bigvee f(D_{i}) \right)= \bigvee_{i \in I} g(D_{i}).
		\end{align*}
		In the same manner, one can see that $g$ preserves multiplications.
		The uniqueness of $g$ is straightforward. From Proposition \ref{prop-new-1} we obtain that
		the reflection is full.

		It remains to show that $g$ is an order-embedding whenever $f$ is an order-embedding. Evidently, $g$ is monotone. Suppose that $D,E\in \QQ(S)$ with $g(D)\leqs g(E)$. Our goal is to prove that $g$ reflects the order. It is enough to check that $D \subseteq \ov{E}$ since $\ov{E}=E$. Consider $a E b\subseteq c\da$, where $a, b \in S^1$, $c\in S$. We have $ f(a) f(E) f(b) \subseteq f(c)\da$, and so $\bigvee (f(a) f(E) f(b)) \leqs f(c)$. Therefore,
		\[\begin{array}{r c l}
		f(adb)&=&f(a) f(d) f(b)\leqs f(a) \left( \bigvee f(D)\right) f(b)= f(a) g(D) f(b)\\[0.2cm]
		&\leqs&  f(a) g(E) f(b)= f(a) \left( \bigvee f(E)\right) f(b)=\bigvee (f(a) f(E) f(b))\leqs f(c)
		\end{array}
		\]
		for every $d \in D$. Since $f$ is an order-embedding, we obtain $adb \leqs c$,  which means that $D \subseteq \ov{E}$ as desired.
	\end{proof}

	\begin{remark}\em Recall that the order-embedding property that holds in Theorem \ref{theo-1} does
		not hold for the $\QUAN$-reflection obtained by $\D$-ideals in Theorem  \ref{theo-reflection-D}, see Example \ref{example-2}.
	\end{remark}

	By a similar method as in Proposition \ref{theo-reflector-D(S)}, we thereby obtain another reflector for category $\QUAN$.
	
	\begin{proposition}\cite[Theorem 3.14]{XZH}\label{coro-1}
		$\QUAN$ is a full reflective subcategory of $\CPS$ with the reflector functor $\QQ: \CPS \to \QUAN$ defined by the assignment
		\[
		\xymatrix{
			S \ar[rr]^{\tau_S} \ar[d]_{f}
			&  & \QQ(S) \ar[d]^{\QQ(f)}\\
			T \ar[rr]^{\tau_T}&  & \QQ(T)}
		\]
		where
		\[
		\QQ(f)(D) = \ov{f(D)\da}
		\]
		for every $D \in \QQ(S)$.
	\end{proposition}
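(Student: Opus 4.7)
The plan is to mimic the proof of Proposition \ref{theo-reflector-D(S)}, replacing the role of Theorem \ref{theo-reflection-D} by Theorem \ref{theo-1} and the role of $\IDA$ by $\QQ$. By Theorem \ref{theo-1}, for every posemigroup $S$ the morphism $\tau_S: S \to \QQ(S)$ is a $\QUAN$-reflection for $S$, and by Proposition \ref{prop-new-1} a $\POSG$-morphism between quantales lies in $\CPS$ if and only if it is a $\QUAN$-morphism, so the inclusion $\QUAN \hookrightarrow \CPS$ is full and $\QUAN$ is therefore a full reflective subcategory of $\CPS$. This gives, via proposition 4.22 in \cite{Joy of cats}, a left adjoint $\QQ: \CPS \to \QUAN$ to the inclusion, such that for each $\CPS$-morphism $f: S \to T$, $\QQ(f)$ is the unique $\QUAN$-morphism rendering the square in the statement commutative.

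The only non-formal point is to verify the explicit formula $\QQ(f)(D) = \ov{f(D)\da}$. For this I would use the join-density of $\tau_S(S)$ in $\QQ(S)$ established in Theorem \ref{theo-tau}: every $D \in \QQ(S)$ satisfies $D = \bigvee^{\QQ(S)} \tau_S(D) = \bigvee_{d \in D}^{\QQ(S)} d\da$. Applying $\QQ(f)$, which is a $\QUAN$-morphism and therefore preserves arbitrary joins, and using the commuting square $\QQ(f) \circ \tau_S = \tau_T \circ f$, I compute
\begin{align*}
\QQ(f)(D) &= \QQ(f)\!\left(\bigvee_{d \in D}\nolimits^{\QQ(S)} \tau_S(d)\right)
= \bigvee_{d \in D}\nolimits^{\QQ(T)} \QQ(f)(\tau_S(d)) \\
&= \bigvee_{d \in D}\nolimits^{\QQ(T)} \tau_T(f(d))
= \bigvee_{d \in D}\nolimits^{\QQ(T)} f(d)\da
= \ov{f(D)\da},
\end{align*}
where the last equality uses the description of joins in $\QQ(T)$ as the closure of the union.

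The main (and essentially only) obstacle is to be sure the formula is well-defined, i.e.\ that $\ov{f(D)\da}$ indeed lies in $\QQ(T)$; this is immediate because $\ov{\ \cdot\ }$ is a closure operator, so $\ov{f(D)\da}$ is closed by construction. Functoriality of $\QQ$ (preservation of identities and composition) follows automatically from the uniqueness clause of the reflection, so no further verification is needed. Thus the proof consists in invoking Theorem \ref{theo-1} and Proposition \ref{prop-new-1} to obtain the full reflective structure, and then reading off $\QQ(f)$ from the join-density formula above.
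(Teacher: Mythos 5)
Your proposal is correct and follows exactly the route the paper intends: the paper's own proof is just the remark that one argues ``by a similar method as in Proposition~\ref{theo-reflector-D(S)}'', and your substitution of Theorem~\ref{theo-1} for Theorem~\ref{theo-reflection-D}, together with the join-density computation yielding $\QQ(f)(D)=\ov{f(D)\da}$, is precisely that method carried out. No gaps.
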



	\section{Examples}

	In this section, we present, for illustration purposes, five simple examples.
	
	\begin{example}\label{exam-qu-n-a}\rm
		Let $X=\{x,y,z\}$ be a poset with $z$ being the top element,  $x$ and $y$ being parallel. Let
		\[
		S=\{ a_1 x_1 a_2 x_2 a_3 \cdots a_n x_n a_{n+1}\mid x_i \in  X, \  a_j \in \mathbb{N}, \ i\in\{0,1,\ldots ,n\},\ j\in\{1,2,\ldots ,n+1\}, \ n \in \mathbb{N} \}.
		\]
		For $\alpha\in S$, denote by $|\alpha|$ as the number of the elements $x_i $ that appear in $\alpha$. For example, $|0|=0$, $|3x1|=1$, $|2z3y6|=2.$ For convenience, if $\alpha\in S$ with $|\alpha|=n$, we will write
		\[
		\alpha=a_1^{\alpha} x_1^{\alpha} a_2^{\alpha} \cdots a_n^{\alpha} x_n^{\alpha} a_{n+1}^{\alpha}.
		\]
		
		Given $\alpha,\beta\in S$ with $|\alpha|=n$, $|\beta|=m$,  the multiplication $\alpha \cdot \beta$ is defined by
		\[
		\alpha \cdot \beta := a_1^{\alpha} x_1^{\alpha} a_2^{\alpha} \cdots a_n^{\alpha} x_n^{\alpha} \left(a_{n+1}^{\alpha}+a_1^{\beta}\right) x_1^{\beta} a_2^{\beta} \cdots a_m^{\beta} x_m^{\beta} a_{m+1}^{\beta}.
		\]
		It is clear that $(S, \cdot)$ is a semigroup. Define a relation $\leqs$ on $S$ by $\alpha \leqs\beta$ if and only if the following conditions hold:
		
		($S$1) $|\alpha|=|\beta|$,
		
		($S$2) $\alpha =\beta$ when $|\alpha|=0$,
		
		($S$3)  if  $|\alpha|>0$ then $x_i^{\alpha} \leqs  x_i^{\beta}$ for each $i=1,\ldots,|\alpha|$, and
		
		($S$4) there exists a unique sequence $c_1,\ldots,c_{|\alpha|}$ such that the equalities in (\ref{eq-example1}) are satisfied,
		\begin{equation}\label{eq-example1}
			a_j^{\beta} = a_j^{\alpha} + c_{j-1}+c_j,  \   j\in\{1,\ldots, |\alpha|+1\},
		\end{equation}
		with $c_0=0=c_{|\alpha|+1}$, and for $i\not=0,|\alpha|+1$, $c_i\in\N$ if $x_i^{\alpha} = x_i^{\beta}$, and $c_i\in\Z$ otherwise.
		
		It is routine to check that $\leqs$ defined above is a partial order on $S$ and $(S,\cdot,\leqs)$ becomes a posemigroup. For each pair $m\neq n\in \N$,
		$m$ is incomparable to $n$ by (S2). Therefore, 0 is parallel to 1.

		Now, we are going to find a subset $M$ in $S$ that satisfies (\ref{eq-1}) and (\ref{eq-1-2}), but not  (\ref{eq-1-3}).
		
		Take $M=\{0x0, \ 0y0\}$. Clearly, $0z0$ is an upper bound of $M$. Consider an upper bound $\alpha$ of $M$ in $S$.  Then $|\alpha|=1$ by ($S$1) and $x_1^{\alpha}=\bigvee\{x,y\}$ by ($S$3). Therefore $x_1^{\alpha}=z$.
		Moreover, ($S4$) provides that $a_1^{\alpha} =a_2^{\alpha} $,   thus $0z0 \leqs \alpha$. We hence obtain that $\bigvee M = 0z0$.
		
		Next, we prove that $\alpha \cdot (\bigvee M) = \bigvee(\alpha \cdot M)$  for any $\alpha \in S$. Evidently, $\alpha \cdot 0z0$ is an upper bound of $\alpha \cdot M$.
		Assume that $\beta$ is an upper bound of $\alpha \cdot M$. If $|\alpha|=0$, then $\alpha \in\N$ and $|\beta|=1$. It is clear that $x_1^{\beta}=z$.
		Since $\alpha \cdot 0x0 \leqs \beta$, and $a_2^{\alpha \cdot 0x0}=0$, we have  $a_1^{\beta} = a_1^{\alpha \cdot 0x0} + a_2^{\beta}$. Notice that $a_1^{\alpha \cdot 0x0}=a_1^{\alpha \cdot 0z0}$, we immediately obtain that
		\[
		a_1^{\beta}  =a_1^{\alpha \cdot 0z0} +a_2^{\beta}, \ \ a_2^{\beta}=a_2^{\alpha \cdot 0z0}+
		a_2^{\beta},
		\]
		which gives that $\alpha \cdot 0z0 \leqs \beta$ for $|\alpha|=0$.
		
		If $|\alpha|>0$, then $|\beta|=|\alpha \cdot 0x0|=|\alpha|+1$, $ x_{|\beta|}^{\alpha \cdot 0x0}=x$, and $x_{|\beta|}^{\alpha \cdot 0y0}=y$. By ($S$3), we get that $x_{|\beta|}^{\beta}=z$.
		Since $\alpha \cdot 0x0 \leqs \beta$, there exists a unique sequence   $c_1,\ldots,c_{|\beta|}$ satisfying (\ref{eq-example1}), that is,
		\begin{equation}\label{neweq}
			a_j^{\beta} = a_{j}^{\alpha \cdot 0x0} + c_{j-1} + c_{j}, \ j\in\{1,\ldots, |\beta|+1\},
		\end{equation}
		with $c_0=0=c_{|\beta|+1}$, for $i\not=0,|\beta|+1$, $c_i\in\N$ when $x_i^{\alpha \cdot 0x0} = x_i^{\beta}$, and $c_i\in\Z$ otherwise. It is easy to calculate that for the same sequence $c_1,\ldots,c_{|\beta|}$ in (\ref{neweq}), we have
		\[
		a_j^{\beta} =a_{j}^{\alpha \cdot 0z0} + c_{j-1} + c_{j}, \ j\in\{1,\ldots, |\beta|+1\},
		\]
		with $c_i$ fulfilling (\ref{eq-example1}) for $ a_j^{\beta}$ and $a_{j}^{\alpha \cdot 0z0}$.
		So $\alpha \cdot 0z0 \leqs \beta$ for $|\alpha|>0$. Finally, we obtain that $\alpha \cdot (\bigvee M) =\alpha \cdot 0z0= \bigvee(\alpha \cdot M)$. The equality  $(\bigvee M) \cdot \alpha = \bigvee(M \cdot \alpha)$ can be similarly verified.
		
		 {Let us compute $\bigvee(1 \cdot M \cdot 1)$.  We have
			
			$$1 ((0x0)1)=1(0x1)=1x1\quad \text{and}\quad
			1((0y0) 1)=1(0y1)=1y1. $$
			
			Put $\overline{M}=\{1x1, 1y1\}$. Consider an upper bound $\alpha$ of $\overline{M}$ in $S$.  Then $|\alpha|=1$ by ($S$1) and $x_1^{\alpha}=\bigvee\{x,y\}=z$ by ($S$3).
			Moreover, $1x1 \leqs \alpha$ and ($S4$) provide that
			\[
			a_1^\alpha=1+0+c_1 \quad \text{and}\quad a_2^\alpha=1+c_1+0,
			\]
			{where $c_1\in \Z$.}

			Hence $a_1^{\alpha} =a_2^{\alpha}=1+c_1\in \N$ 	{(since $x_1^{\alpha}=z\not= x$  we have that  $c_1$ need not be in $\N$)}. Hence every upper bound of $\overline{M}$ is
			of the form $kzk$ where $k\in \N$. It is enough to put $c_1=k-1$. In particular $0z0\geqslant 1x1$ and $0z0\geqslant 1y1$.
						}
		
		{We now calculate that $0z0\leqs kzk$ for all $k\in \N$. If we put $(c_0,c_1,c_2)=(0,k,0)$ we obtain
			\[
			k=0+0+k\quad \text{and} \quad k=0+k+0.
			\]
		
		It is now clear that
		\[
		1\cdot\left(\bigvee M\right)\cdot 1=1z1>0z0=\bigvee(1 \cdot M \cdot 1).
		\]
}	
	\end{example}

{
\begin{example}\label{example-n5}\em Let $X=\{1,2,3\}$ and $S=\mathcal{P}(X)=\{u,a,b,c,d,e,f,v\}$, where $u=\emptyset$, $a=\{1\}$, $b=\{2\}$, $c=\{3\}$, $d=\{1,2\}$, $e=\{1,3\}$, $f=\{2,3\}$, $v=X$. Then $(S,\cap,\subseteq)$ becomes a posemigroup with the following ordering:
\[
 \begin{tikzpicture}[scale=0.5]
 \draw[fill=black] (0,1) circle (4pt);
 \draw[fill=black] (0,-1) circle (4pt);
 \draw[fill=black] (2,1) circle (4pt);
 \draw[fill=black] (-2,1) circle (4pt);
 \draw[fill=black] (2,-1) circle (4pt);
 \draw[fill=black] (-2,-1) circle (4pt);
 \draw[fill=black] (0,3) circle (4pt);
 \draw[fill=black] (0,-3) circle (4pt);
 \node at (-2.5,-1) {$a$};
 \node at (0.5,-1) {$b$};
 \node at (2.5,-1) {$c$};
 \node at (-2.5,1) {$d$};
 \node at (0.5,1) {$e$};
 \node at (2.5,1) {$f$};
 \node at (0.5,-3) {$u$};
 \node at (0.5,3) {$v$};
 \draw (0,1) -- (2,-1) -- (0,-3) -- (-2,-1) -- (0,1);
 \draw (0,-1) -- (2,1) -- (0,3) -- (-2,1) -- (0,-1);
 \draw (0,1) -- (0,3);
 \draw (0,-1) -- (0,-3);
 \draw (2,1) -- (2,-1);
 \draw (-2,1) -- (-2,-1);
 \end{tikzpicture}
\]
We examine that $
\IDD(S)=\left\{u\da,a\da,b\da,c\da,d\da,e\da,f\da,S\right\}.$ Set
\[
\A=\big\{\{x\}\mid x\in S\big\}\cup\big\{\{b,c\},\{u,b\},\{u,c\}\big\}.
\]
Denoted by $H=\{d,e,f\}\da, I_{1}=\{d,e\}\da, I_{2}=\{d,f\}\da, I_{3}=\{e,f\}\da, U_1= \{a,b,c,d,u\}, U_2= \{a,b,c,e,u\}, U_3= \{a,b,c,f,u\}, J_{1}=d\da, J_{2}=e\da, J_{3}=f\da, L=\{a,b,c,u\}, M_{1}=\{a,b,u\}, M_{2}=\{b,c,u\}, M_{3}=\{a,c,u\}, N_{1}=b\da, N_{2}=a\da, N_{3}=c\da, O=u\da, P=\emptyset $. Accordingly, we draw the diagram of $\IDA(S)$ as
\[
 \begin{tikzpicture}[scale=0.5]
 \draw[fill=black] (0,-1.5) circle (4pt);
 \draw[fill=black] (0,0) circle (3pt);
 \draw[fill=black] (0,1.5) circle (4pt);
 \draw[fill=black] (2,1.5) circle (4pt);
 \draw[fill=black] (-2,1.5) circle (4pt);
 \draw[fill=black] (2,3) circle (4pt);
 \draw[fill=black] (-2,3) circle (4pt);
 \draw[fill=black] (0,3) circle (4pt);
 \draw[fill=black] (2,4.5) circle (4pt);
 \draw[fill=black] (-2,4.5) circle (4pt);
 \draw[fill=black] (0,4.5) circle (4pt);
 \draw[fill=black] (2,6) circle (4pt);
 \draw[fill=black] (-2,6) circle (4pt);
 \draw[fill=black] (0,7.5) circle (4pt);
 \draw[fill=black] (0,9) circle (4pt);
 \tikzstyle{every node}=[scale=0.75]
 \node at (0.6,-1.5) {$P$};
 \node at (0.6,0) {$O$};
 \node at (0.6,1.5) {$N_2$};
 \node at (-2.6,1.5) {$N_1$};
 \node at (2.6,1.5) {$N_3$};
 \node at (2.6,3) {$M_3$};
 \node at (-2.6,3) {$M_1$};
 \node at (-2.6,4.5) {$J_1$};
 \node at (2.6,4.5) {$J_2 $};
 \node at (0.6,4.5) {$U_3$};
 \node at (0.6,3) {$J_3$};
 \node at (-2.6,6) {$I_2$};
 \node at (2.6,6) {$I_3$};
 \node at (0.6,7.5) {$H$};
 \node at (0.6,9) {$S$};
 \draw (0,0) -- (2,1.5) -- (2,3) -- (2,4.5) -- (2,6)-- (0,4.5)-- (-2,6)-- (-2,4.5)-- (-2,3)-- (-2,1.5)-- (0,0);
 \draw (2,6) -- (0,7.5) -- (-2,6);
 \draw (0,0) -- (0,1.5);
 \draw (0,7.5) -- (0,9);
 \draw (2,3) -- (0,1.5) -- (-2,3);
 \draw (0,3)--(0,4.5);
 \draw (0,0)--(0,-1.5);
 \draw (0,3)--(2,1.5);
 \draw (0,3)--(-2,1.5);
 \end{tikzpicture}
\]
and the diagram of $Id_{\mathcal{A}_{0}}(S)$  as
\begin{center}
\begin{tikzpicture}[scale=0.5]
\draw[fill=black] (-4,0) circle (4pt);
 \draw[fill=black] (2,-0) circle (4pt);
 \draw[fill=black] (4,0) circle (4pt);
\draw[fill=black] (0,1) circle (4pt);
 \draw[fill=black] (-2,0) circle (4pt);
 \draw[fill=black] (4,1) circle (4pt);
 \draw[fill=black] (-4,1) circle (4pt);
 \draw[fill=black] (0,2) circle (4pt);
 \draw[fill=black] (4,2) circle (4pt);
 \draw[fill=black] (-4,2) circle (4pt);
 \draw[fill=black] (0,3) circle (4pt);
 \draw[fill=black] (0,4) circle (4pt);
 \draw[fill=black] (0,-1) circle (4pt);
 \draw[fill=black] (4,-1) circle (4pt);
 \draw[fill=black] (-4,-1) circle (4pt);
 \draw[fill=black] (0,-2) circle (4pt);
 \draw[fill=black] (4,-2) circle (4pt);
 \draw[fill=black] (-4,-2) circle (4pt);
 \draw[fill=black] (0,-3) circle (4pt);
 \draw[fill=black] (0,-4) circle (4pt);
 \tikzstyle{every node}=[scale=0.75]
 \node at (1.9,-0.5) {$L$};
 \node at (4.6,0) {$J_3$};
 \node at (-2.6,0) {$J_2$};
 \node at (-4.6,0) {$J_1$};
 \node at (-4.6,1) {$U_{1}$};
 \node at (1,0.9) {$U_{2}$};
 \node at (4.6,1) {$U_{3}$};
 \node at (-4.6,2) {$I_{1}$};
 \node at (1,2.2) {$I_{2}$};
 \node at (4.6,2) {$I_{3}$};
 \node at (0.6,3.2) {$H$};
 \node at (0.6,4) {$S$};
 \node at (-4.6,-1) {$M_{1}$};
 \node at (-0.6,-1.1) {$M_{3}$};
 \node at (4.6,-1) {$M_{2}$};
 \node at (-4.6,-2) {$N_{1}$};
 \node at (0.6,-2.1) {$N_{2}$};
 \node at (4.6,-2) {$N_{3}$};
 \node at (0.6,-3.2) {$O$};
 \node at (0.6,-4) {$P$};
 \draw (4,1) -- (0,2) -- (-4,1);
 \draw (0,1) -- (4,2) -- (0,3) -- (-4,2) -- (0,1);
 \draw (-4,1)--(-4,0)--(-4,-1)--(2,0)--(-4,1);
 \draw (0,1)--(2,-0)--(0,-1)--(2,-0)--(0,1);
 \draw (4,1)--(4,0)--(4,-1)--(2,-0)--(4,1);
 \draw (0,2) -- (0,3);
 \draw (0,3) -- (0,4);
 \draw (4,1) -- (4,2);
 \draw (-4,1) -- (-4,2);
 \draw (0,-1) -- (0,-2) -- (-4,-1);
 \draw (0,-1) -- (4,-2) -- (0,-3) -- (-4,-2) -- (4,-1);
 \draw (0,-2) -- (0,-3);
 \draw (0,-3) -- (0,-4);
 \draw (4,-1) -- (4,-2);
 \draw (-4,-1) -- (-4,-2);
 \draw (0,1)--(-2,0)--(0,-1);
 \end{tikzpicture}
		\end{center}

\end{example}}
	
	\begin{example} \label{example-1}\em
		Let $S$ be a posemigroup with the following multiplication and ordering.
		\vspace{5mm}
		
		\begin{minipage}{0.5\linewidth}
			\[
			\begin{tabular}{c|c c c  }
				$\cdot$  &  $a$  &  $b$ & $c$  \\
				\hline
				$a$  & $a$  &  $c$ & $c$ \\
				$b$  & $a$  &  $c$ & $c$ \\
				$c$  & $a$  &  $c$ & $c$ \\
			\end{tabular}
			\]
		\end{minipage}
		\hfill
		\begin{minipage}{.8\linewidth}
			\begin{tikzpicture}
				\draw[fill=black] (0,1) circle (2pt);
				\draw[fill=black] (2,1) circle (2pt);
				\draw[fill=black] (1,2) circle (2pt);
				\node at (2.3,1) {$c$};
				\node at (-0.3,1) {$b$};
				\node at (1,2.3) {$a$};
				\draw (0,1) -- (1,2) -- (2,1);
			\end{tikzpicture}
		\end{minipage}
		\\[0.2cm]

		Since the set $\{b, c\}$ is not  ${\mathcal D}$-admissible we have that it is
		a ${\mathcal D}$-ideal. Moreover, $\{b, c\}$ is closed.
		Hence {{$ \IDD{(S)}=\QQ(S)$}} is
				\[
		\begin{tikzpicture}
			\draw[fill=black] (0,1) circle (2pt);
			\draw[fill=black] (2,1) circle (2pt);
			\draw[fill=black] (1,2) circle (2pt);
			\draw[fill=black] (1,0) circle (2pt);
			\draw[fill=black] (1,3) circle (2pt);
			\node at (1.3,3) {$a\da$};
			\node at (1.55,2.1) {$\{b, c\}$};
			\node at (2.3,1) {$c\da$};
			\node at (-0.3,1) {$b\da$};
			\node at (1,-0.3) {$\emptyset$};
			\draw (0,1) -- (1,2) -- (2,1) -- (1,0) -- (0,1);
			\draw (1,3) -- (1,2);
		\end{tikzpicture}
		\]
		Consider $t: S\to {{\IDD{(S)}} }$ defined as in Theorem \ref{theo-D-5}, and $I=\{b,c\}\subseteq S$. Straightforward checking shows that
		\[
		t(b\vee c)=t(a)=a\da\neq I=\wid{b\da \cup c\da}=t(b)\vee t(c).
		\]
		In addition, for any subset $X$ in $t(S)$, we have $I \neq \bigwedge X$.
		
		Define $f: S \to S$ by $f(s) = a$, $\forall s \in S$.
		Evidently, $f$ is a posemigroup morphism and for every $M \subseteq S$, $f\left(\ov{M\da}\right) \subseteq\ov{f(M)\da} = a\da$.  Therefore, $f$ is closure preserving.  But
		\[
		f\left(\ov{\{b,c\}}\right) =  f\left(\{b,c\}\right) = %
		\{a\} \varsubsetneq a\da = \ov{f\left(\{b,c\}\right)}.
		\]
	\end{example}

	\begin{example}\em \label{example-2}
		Let $S$ be a posemigroup with the following multiplication and ordering.
		\\
		
		\begin{minipage}{0.5\linewidth}
			$$\begin{array}{c|ccccc}
				\cdot & a & b & c & d & e\\
				\hline
				a & b & a & a & a & a\\
				b & a & b & b & b & b\\
				c & a & b & b & b & b\\
				d & a & b & b & d & d\\
				e & a & b & c & d & e
			\end{array}$$
		\end{minipage}
		\hfill
		\begin{minipage}{.8\linewidth}
			\begin{tikzpicture}
				\draw[fill=black] (0.5,0) circle (2pt);
				\draw[fill=black] (-0.5,0) circle (2pt);
				\draw[fill=black] (0,-1) circle (2pt);
				\draw[fill=black] (1.5,-0.5) circle (2pt);
				\draw[fill=black] (2.5,-0.5) circle (2pt);
				\node at (-0.5,0.3) {$b$};
				\node at (0.5,0.3) {$c$};
				\node at (0,-1.3) {$d$};
				\node at (1.5,-0.8) {$a$};
				\node at (2.5,-0.8) {$e$};
				\draw (0.5,0) -- (0,-1) -- (-0.5,0) ;
			\end{tikzpicture}
		\end{minipage}
		\\
		\\
		\\
		Then the diagrams of ${\IDD{(S)}}$ and $\QQ(S)$  are
		
		\begin{center}
			\begin{tikzpicture}
				\draw[fill=black] (5,0) circle (2pt);
				\draw[fill=black] (2.5,1) circle (2pt);
				\draw[fill=black] (5,1) circle (2pt);
				\draw[fill=black] (7.5,1) circle (2pt);
				\draw[fill=black] (5/3,2) circle (2pt);
				\draw[fill=black] (10/3,2) circle (2pt);
				\draw[fill=black] (15/3,2) circle (2pt);
				\draw[fill=black] (20/3,2) circle (2pt);
				\draw[fill=black] (25/3,2) circle (2pt);
				\draw[fill=black] (0,3) circle (2pt);
				\draw[fill=black] (2,3) circle (2pt);
				\draw[fill=black] (4,3) circle (2pt);
				\draw[fill=black] (6,3) circle (2pt);
				\draw[fill=black] (8,3) circle (2pt);
				\draw[fill=black] (10,3) circle (2pt);
				\draw[fill=black] (2,4) circle (2pt);
				\draw[fill=black] (4,4) circle (2pt);
				\draw[fill=black] (6,4) circle (2pt);
				\draw[fill=black] (8,4) circle (2pt);
				\draw[fill=black] (5,5) circle (2pt);
				\node at (5,-0.3) {$\emptyset$};
				\node at (2.2,1) {$a\da$};
				\node at (4.7,0.9) {$d\da$};
				\node at (7.8,1) {$e\da$};
				\node at (5/3-0.6,2) {$\{a,d\}$};
				\node at (10/3-0.3,2) {$b\da$};
				\node at (15/3-0.6,2) {$\{a,e\}$};
				\node at (20/3+0.3,2) {$c\da$};
				\node at (25/3+0.5,2) {$\{d,e\}$};
				\node at (-0.7,3) {$\{a,b,d\}$};
				\node at (1.3,3) {$\{a,c,d\}$};
				\node at (3.3,3) {$\{a,d,e\}$};
				\node at (6.7,3) {$\{b,c,d\}$};
				\node at (8.7,3) {$\{b,d,e\}$};
				\node at (10.7,3) {$\{c,d,e\}$};
				\node at (1.2,4.1) {$\{a,b,c,d\}$};
				\node at (3.3,4.1) {$\{a,b,d,e\}$};
				\node at (6.7,4.1) {$\{a,c,d,e\}$};
				\node at (8.8,4.1) {$\{b,c,d,e\}$};
				\node at (5,5.3) {$S$};
				\draw (5,0) -- (2.5,1) -- (5/3,2) -- (0,3) -- (2,4) -- (5,5) -- (8,4) -- (10,3) -- (25/3,2) -- (7.5,1) -- (5,0);
				\draw (2.5,1) -- (15/3,2) -- (4,3) -- (4,4) -- (5,5);
				\draw (5,0) -- (5,1) -- (5/3,2) -- (2,3) -- (2,4);
				\draw (5,1) -- (10/3,2) -- (0,3) -- (4,4);
				\draw (5,1) -- (25/3,2);
				\draw (5,1) -- (20/3,2) -- (2,3) -- (6,4) -- (5,5);
				\draw (7.5,1) -- (15/3,2);
				\draw (7.5,1) -- (25/3,2) -- (4,3) -- (6,4);
				\draw (5/3,2) -- (4,3);
				\draw (10/3,2) -- (6,3) -- (8,4);
				\draw (10/3,2) -- (8,3) -- (4,4);
				\draw (20/3,2) -- (10,3) -- (6,4);
				\draw (20/3,2) -- (6,3) -- (2,4);
				\draw (25/3,2) -- (8,3) -- (8,4);
			\end{tikzpicture}\\
			The diagram of ${\IDD{(S)}}$
		\end{center}

		\begin{center}
			\begin{tikzpicture}
				\draw[fill=black] (5,0) circle (2pt);
				\draw[fill=black] (2.5,2.5) circle (2pt);
				\draw[fill=black] (5,1.25) circle (2pt);
				\draw[fill=black] (7.5,2.5) circle (2pt);
				\draw[fill=black] (3.75,2.5) circle (2pt);
				\draw[fill=black] (5,2.5) circle (2pt);
				\draw[fill=black] (5.625,4.375) circle (2pt);
				\draw[fill=black] (5,3.75) circle (2pt);
				\draw[fill=black] (6.875,3.125) circle (2pt);
				\draw[fill=black] (5,5) circle (2pt);
				\node at (5,-0.3) {$\emptyset$};
				\node at (2.2,2.5) {$a\da$};
				\node at (4.7,1.25) {$d\da$};
				\node at (7.8,2.5) {$e\da$};
				\node at (3.4,2.5) {$b\da$};
				\node at (5.4,2.5) {$c\da$};
				\node at (7.4,3.2) {$\{d,e\}$};
				\node at (5.632,3.6) {$\{b,c,d\}$};
				\node at (6.5,4.4) {$\{b,c,d,e\}$};
				\node at (5,5.3) {$S$};
				\draw (5,0) -- (2.5,2.5) -- (5,5) -- (5.625,4.375) -- (6.875,3.125) -- (7.5,2.5) -- (5,0);
				\draw (5,0) -- (5,1.25) -- (3.75,2.5) -- (5,3.75) -- (5,2.5) -- (5,1.25);
				\draw (5,3.75) -- (5.625,4.375);
				\draw (5,1.25) -- (6.875,3.125);
			\end{tikzpicture}\\
			The diagram of $\QQ(S)$
		\end{center}
		
		In this example, ${\IDD{(S)}}=\PP(S)$.
		Define two mappings $t: S \to {\IDD{(S)}}$ and $f: S \to \QQ(S)$ by $t(s) = s \da$, $f(s) = s \da$, $\forall s \in S$. By Theorem \ref{theo-reflection-D}, there is a unique morphism $g:{\IDD{(S)}} \to \QQ(S)$ given by $g(D)= \bigvee_{\QQ(S)}  f(D)$, for every  {${\mathcal D}$}-ideal $D$ of $S$, such that the following diagram commutes.
		\[
		\bfig\qtriangle/->`->`-->/[S` {\IDD{(S)}}`\QQ(S);t`f`g]\efig
		\]
		Evidently, $f$ is an order-embedding. However, $g(\{a,b,c,d\}) = g(\{a,b,d,e\}) = S$, which indicates that $g$ is not an order-embedding.
	\end{example}

	\begin{example}\em \label{example-3}
		Let $S_1$ be a posemigroup with the following multiplication and ordering.\\
		
		\begin{minipage}{0.5\linewidth}
			$$\begin{array}{c|ccccc}
				\cdot & a & b & c & d & e\\
				\hline
				a & a & a & a & a & a\\
				b & a & a & a & a & a\\
				c & a & a & a & a & a\\
				d & a & a & a & a & a\\
				e & a & a & a & a & a
			\end{array}$$
		\end{minipage}
		\hfill
		\begin{minipage}{.8\linewidth}
			\begin{tikzpicture}
				\draw[fill=black] (0,0) circle (2pt);
				\draw[fill=black] (2,0) circle (2pt);
				\draw[fill=black] (1,1.5) circle (2pt);
				\draw[fill=black] (4,0) circle (2pt);
				\draw[fill=black] (3,1.5) circle (2pt);
				\node at (0,-0.3) {$b$};
				\node at (2,-0.3) {$c$};
				\node at (1,1.8) {$a$};
				\node at (4,-0.3) {$d$};
				\node at (3,1.8) {$e$};
				\draw (0,0) -- (1,1.5) -- (2,0) -- (3,1.5) -- (4,0);
				\draw (0,0) -- (3,1.5);
				\draw (4,0) -- (1,1.5);
			\end{tikzpicture}
		\end{minipage}\\
		
		Let $S_2$ be a posemigroup with the following multiplication and ordering.\\
		
		\begin{minipage}{0.5\linewidth}
			$$\begin{array}{c|cccccc}
				\cdot & s & a & b & c & d & e\\
				\hline
				s & a & a & a & a & a & a\\
				a & a & a & a & a & a & a\\
				b & a & a & a & a & a & a\\
				c & a & a & a & a & a & a\\
				d & a & a & a & a & a & a\\
				e & a & a & a & a & a & a
			\end{array}$$
		\end{minipage}
		\hfill
		\begin{minipage}{.8\linewidth}
			\begin{tikzpicture}
				\draw[fill=black] (0,0) circle (2pt);
				\draw[fill=black] (3,0) circle (2pt);
				\draw[fill=black] (0,1.5) circle (2pt);
				\draw[fill=black] (3,1.5) circle (2pt);
				\draw[fill=black] (1.5,2.5) circle (2pt);
				\draw[fill=black] (1.5,3.5) circle (2pt);
				\node at (-0.3,0) {$b$};
				\node at (3.3,0) {$c$};
				\node at (-0.3,1.5) {$s$};
				\node at (3.3,1.5) {$d$};
				\node at (1.8,2.5) {$e$};
				\node at (1.5,3.8) {$a$};
				\draw (0,0) -- (0,1.5) -- (3,0) -- (3,1.5) -- (0,0);
				\draw (0,1.5) -- (1.5,2.5) -- (3,1.5);
				\draw (1.5,2.5) -- (1.5,3.5);
			\end{tikzpicture}
		\end{minipage}\\
		
		Define $\iota: S_1 \to S_2$ by $\iota(x) = x$, for every $x \in S_1$. Then $\iota$ is  $\D$-admissible preserving but not closure preserving since $\iota\left(\ov{\{b, c\}}\right)=\{b, c, d\} \nsubseteq \{b, c\}=\ov{\iota(\{b, c\})}$.
	\end{example}
	
	
	{\section{Conclusion}

	In this paper, we mainly investigate the category of marked posemigroups and marked
	quantales with the aim of including within a uniﬁed landscape several classes of
	natural ordered structures arising in fuzzy logic. Several results concerning partial frames have been generalized to our framework.
	Our main results establish reflectors to
	quantales from marked quantales. The existence of such reflectors is important,
	interesting, and useful. Among other things, it serves to justify the chosen
	formalism for the marked quantales. We have also discussed another reflection to quantales from
	the category of posemigroups where morphisms are closure and multiplication preserving mappings.
	
	
	The future theoretical work will be about the following issue. It is well known
	that the category $\UQUAN${} of unital quantales is
	the category \textsf{Mon}(\textbf{Sup}) of monoid
	objects in the category \textbf{Sup} of $\bigvee$-semilattices, with respect to the tensor
	product. Since we are defining a larger category $\MQUAN${} of
	marked quantales, it is
	natural to ask whether the category $\UMQUAN{}$ of unital marked quantales is \textsf{Mon}($X$) for some suitably
	defined category $X$ that generalises \textbf{Sup}. The natural guess for such $X$
	would be the category of marked semilattices. We intend to study this problem in more detail.
	One of the first questions that should be answered is whether one needs some other
	conditions besides conditions (A1) and (A2).
	

}
	
	\section*{Acknowledgement}

	Research of the first named author was supported by the Guangdong Basic and Applied Basic Research Foundation, China, No. 2020A1515010206 and No. 2021A1515010248,  the Science and Technology Program of Guangzhou, China, No. 202102080074. Research of the second named author was supported by the Austrian Science Fund (FWF), project I~4579-N, and the Czech Science Foundation (GA\v CR), project 20-09869L, entitled ``The many facets of orthomodularity''.

    We would like to thank Shuang Li and Sergey A. Solovyov for useful comments.
	


\end{document}